\newcommand{\br}{\overline}
\newcommand{\R}{\mathbb R}
\newcommand{\C}{\mathbb C}
\newcommand{\D}{\mathbb D}
\newcommand{\N}{\mathbb N}
\theoremstyle{plain}
\newtheorem{theorem}{Theorem}
\newtheorem{lemma}[theorem]{Lemma}
\newtheorem{prop}[theorem]{Proposition}
\newtheorem{corollary}[theorem]{Corollary}
\newtheoremstyle{case}{}{}{}{}{}{:}{ }{}
\theoremstyle{case}
\newtheorem{case}{Case}
\theoremstyle{definition}
\theoremstyle{remark}
\newtheorem{remark}[theorem]{Remark}
\DeclareMathOperator{\J}{\mathcal{J}}
\DeclareMathOperator{\dist}{\textup{\text{dist}}}
\DeclareMathOperator{\diam}{\textup{\text{diam}}}
\DeclareMathOperator{\Cdim}{\textup{Cdim}}
\DeclareMathOperator{\md}{\textup{mod}}
\numberwithin{equation}{section}
\numberwithin{theorem}{section}
\begin{document}

\title{Semi-hyperbolic rational maps and size of fatou components}
\author{Dimitrios Ntalampekos}
\thanks{The author was partially supported by NSF grant DMS-1506099}
\address{Department of Mathematics\\ University of California, Los Angeles\\
CA 90095, USA}
\email{dimitrisnt@math.ucla.edu}

\subjclass[2010]{Primary: 37F10; Secondary: 30C99}
\date{\today}
\keywords{Semi-hyperbolic, Hausdorff dimension, Circle packings, Homogeneous sets.}

\begin{abstract} Recently, Merenkov and Sabitova introduced the notion of a homogeneous planar set. Using this notion they proved a result for Sierpi\'nski carpet Julia sets of hyperbolic rational maps that relates the diameters of the peripheral circles to the Hausdorff dimension of the Julia set. We extend this theorem to Julia sets (not necessarily Sierpi\'nski carpets) of semi-hyperbolic rational maps, and prove a stronger version of the theorem that was conjectured by Merenkov and Sabitova.
\end{abstract}

\maketitle

\section{Introduction}
In this paper we establish a relation between the size of the Fatou components of a semi-hyperbolic rational map and the Hausdorff dimension of the Julia set. Before formulating the results, we first discuss some background.

A rational map $f:\widehat \C \to \widehat \C$ of degree at least $2$ is \textit{semi-hyperbolic} if it has no parabolic cycles, and all critical points in its Julia set $\J(f)$ are \textit{non-recurrent}. We say that a point $x$ is \textit{non-recurrent} if $x\notin \omega(x)$, where $\omega(x)$ is the set of accumulation points of the orbit $\{f^n(x)\}_{n\in \N}$ of $x$. 

In our setting, we require that the Julia set $\J(f)$ is connected and that there are infinitely many Fatou components. Let $\{D_k\}_{k\geq 0}$ be the sequence of Fatou components, and define $C_k\coloneqq \partial D_k$. Since $\J(f)$ is connected, it follows that each component $D_k$ is simply connected, and thus $C_k$ is connected.  

We say that the collection $\{C_k\}_{k\geq 0}$ is a \textit{packing} $\mathcal P$ and we define the \textit{curvature distribution function} associated to $\mathcal P$ (see below for motivation of this terminology) by
\begin{align}\label{N(x) -Definition}
	N(x)= \# \{ k : (\diam C_k)^{-1}\leq x\}
\end{align}
for $x>0$. Here $\#A$ denotes the number of elements in a given set $A$. Also, the \textit{exponent} $E$ of the packing $\mathcal P$ is defined by
\begin{align}\label{E - Definition}
	E= \inf \biggl\{ t\in \R : \sum_{k\geq 0} (\diam C_k)^t <\infty \biggr\},
\end{align}     
where all diameters are in the spherical metric of $\widehat \C$.

In the following, we write $a\simeq b$ if there exists a constant $C>0$ such that $\frac{1}{C}a\leq b\leq Ca$. If only one of these inequalities is true, we write $a\lesssim b$ or $b\lesssim a$ respectively. We denote the Hausdorff dimension of a set $J\subset \widehat \C$ by $\dim_H J$ (see Section \ref{Section - Minkowski}). We now state our main result.

\begin{theorem}\label{Theorem - Main}
Let $f:\widehat \C \to \widehat \C$ be a semi-hyperbolic rational map such that the Julia set $\J(f)$ is connected and the Fatou set has infinitely many components. Then 
\begin{align*}
	0<\liminf_{x\to\infty} \frac{ N(x)}{x^s} \leq \limsup_{x\to\infty} \frac{N(x)}{x^s} <\infty,
\end{align*}
where $N$ is the curvature distribution function of the packing of the Fatou components of $f$ and $s=\dim_H \J(f)$. In particular $N(x) \simeq x^s$.
\end{theorem}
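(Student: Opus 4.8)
The plan is to read off the counting estimate from the dynamics of $f$ together with the canonical $s$-conformal measure on $\J(f)$, where $s=\dim_H\J(f)$. The main inputs I would use are: (1) the structure theory in the semi-hyperbolic case -- by Sullivan there are no wandering domains, and semi-hyperbolicity excludes parabolic, Siegel and Herman components, so there are finitely many cycles of Fatou components, each an immediate attracting basin, and every $D_k$ is eventually mapped onto one of finitely many model components $D^{(1)},\dots,D^{(N_0)}$; moreover, since there are infinitely many Fatou components their accumulation set (as sets) is a nonempty completely invariant closed set and hence contains $\J(f)$, i.e. arbitrarily small Fatou components are dense in $\J(f)$; (2) the Ma\~n\'e--Carleson--Jones--Yoccoz control of deep inverse branches: there are $r_0>0$, $C_0\ge 1$, $\theta\in(0,1)$ and $D_*\in\N$ such that every connected component $W$ of $f^{-n}(B(x,r_0))$ with $x\in\J(f)$ satisfies $\diam W\le C_0\theta^n$ and $\deg(f^n|_W)\le D_*$, whence $f^n|_W$ has bounded distortion (in the Koebe-plus-branching sense); (3) the $s$-conformal measure: a non-atomic probability $\mu$ on $\J(f)$ with $\mu(f(A))=\int_A|f'|^s\,d\mu$ on injectivity sets, which is geometric, $\mu(B(x,r))\simeq r^{s}$ for $x\in\J(f)$ and $0<r\le r_0$; (4) the Carleson--Jones--Yoccoz fact that Fatou components of a semi-hyperbolic map are John domains with a uniform constant, so a Fatou component of diameter $\rho$ has area $\simeq\rho^{2}$. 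It is plausible that the paper packages parts of this by verifying that $\J(f)$ is a \emph{homogeneous} planar set in the sense of Merenkov--Sabitova and then invoking (an extension of) their machinery; I describe instead the underlying dynamical argument.

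\textbf{The local estimate.} To each Fatou component $D_k$ I attach a scale as follows. Choosing $r_0$ smaller than $2\min_i\diam D^{(i)}$ and using that one application of $f$ changes diameters by a bounded factor (expanding by at most $\sup|f'|$, and contracting near critical points), there is a first $m=m_k\ge 0$ with $\diam f^{m}(D_k)\simeq r_0$; pick $\xi_k\in\partial f^{m}(D_k)\subseteq\J(f)$, so $f^{m}(D_k)\subseteq B(\xi_k,r_0')$ with $r_0'\simeq r_0$, and let $W_k$ be the component of $f^{-m}(B(\xi_k,r_0'))$ containing $D_k$. Since $f^{m}$ maps the Fatou component $D_k$ onto the Fatou component $f^m(D_k)$, bounded-degree distortion of $f^{m}|_{W_k}$ gives $\diam C_k=\diam D_k\simeq\diam W_k$, and the transformation rule for $\mu$ across $f^{m}|_{W_k}$ (bounded degree $\Rightarrow$ bounded distortion of $\int|({f^m})'|^{s}\,d\mu$) together with $\mu(B(\xi_k,r_0')\cap\J(f))\simeq 1$ gives $\mu(W_k\cap\J(f))\simeq(\diam C_k)^{s}$. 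All constants depend only on $f$.

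\textbf{Upper bound.} First, a bounded-overlap statement: for each dyadic scale, any point of $\widehat{\C}$ lies in at most $M=M(f)$ of the sets $W_k$ with $\diam C_k$ in that scale -- indeed such a $W_k$ forces $D_k\subseteq B(z,O(\diam C_k))$, the $D_k$ are pairwise disjoint John domains of area $\simeq(\diam C_k)^2$, and an area count bounds their number near $z$. Then $N(2^{j_0})=\sum_{j\le j_0}\#\{k:2^{-j}\le\diam C_k<2^{-j+1}\}$, and for each $j$ the local estimate, bounded overlap and $\mu(\J(f))=1$ yield $\#\{k:\diam C_k\simeq 2^{-j}\}\cdot 2^{-js}\lesssim\sum_k\mu(W_k\cap\J(f))\le M$, so this count is $\lesssim 2^{js}$; summing the geometric series gives $N(x)\lesssim x^{s}$, i.e. $\limsup_{x\to\infty}N(x)/x^{s}<\infty$.

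\textbf{Lower bound.} Here one reverses the roles. Fix an $r_0'/8$-net $\xi_1,\dots,\xi_P$ of $\J(f)$ and, by density of small Fatou components, a Fatou component $D^{[l]}$ with $\overline{D^{[l]}}\subseteq B(\xi_l,r_0'/8)$ for each $l$. Given small $\delta>0$, run a first-exit (stopping time) construction: for $y\in\J(f)$ follow the pullbacks of the balls $B(\xi_{l},r_0')$ centered near $f^{n}(y)$ along the orbit of $y$, and stop at the first $n$ where the relevant component $W$ containing $y$ has $\diam W\le\delta$. One checks the stopping-time pieces are nested-or-boundedly-overlapping and cover $\J(f)$, and -- crucially -- that $\diam W\ge c(f)\,\delta$: at the previous step the enclosing component had diameter $>\delta$, and one pullback step of a fixed-radius ball shrinks diameters by at most a uniformly bounded factor (this is where bounded degree $D_*$ and the fixed value of $r_0'$ are used), so two consecutive stopping scales are comparable. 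Hence each piece $W$ has $\diam W\simeq\delta$ and $\mu(W\cap\J(f))\simeq\delta^{s}$, so there are $\gtrsim\delta^{-s}$ of them; each contains a component of $f^{-n}(D^{[l]})$ which, by bounded-degree distortion, is a Fatou component $D_k$ with $\diam C_k\gtrsim\delta$, and bounded overlap makes these $D_k$ boundedly repeated. Therefore $N(x)\gtrsim x^{s}$, i.e. $\liminf_{x\to\infty}N(x)/x^{s}>0$; combined with the upper bound this is $N(x)\simeq x^{s}$.

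\textbf{Main obstacle.} The structure theory, the geometric conformal measure and the John property are standard for semi-hyperbolic maps; the real work, specific to the non-hyperbolic case, is the stopping-time decomposition of $\J(f)$ into pieces of \emph{two-sided} controlled diameter carrying comparable $\mu$-mass, with controlled overlap and each containing a substantial Fatou component. The delicate point is that forward iterates may collapse scales near critical points in $\J(f)$, so the comparability of consecutive stopping scales, the nested-or-disjoint structure of the pieces, and the uniform bound $\diam W\ge c(f)\delta$ all have to be extracted from the Ma\~n\'e--Carleson--Jones--Yoccoz bounded-degree estimates rather than from naive Koebe distortion. This is where the homogeneity framework of Merenkov--Sabitova, suitably adapted, is likely the right organizing tool.
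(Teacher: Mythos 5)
Your proposal captures the same dynamical inputs as the paper (Ma\~n\'e's bounded-degree estimates, the Carleson--Jones--Yoccoz John property, the $s$-regularity of the invariant geometry) but organizes the argument differently. The paper decouples the measure-theoretic estimate from the combinatorics of the packing: it first proves, via a conformal elevator (Lemma \ref{Lemma - elevator}) together with \cite{SU} and Falconer's quasi-self-similarity criterion, that $n(\varepsilon)\simeq\varepsilon^{-s}$ where $n(\varepsilon)$ is the maximal number of disjoint $\varepsilon$-balls centered on $\J(f)$; it then verifies that $\J(f)$ is a Merenkov--Sabitova homogeneous set satisfying conditions $(1),(2),(4)$, and imports Lemma \ref{Lemma - N - n} to conclude $N(\beta/\varepsilon)\simeq n(\varepsilon)$. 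You instead attach a scale directly to each Fatou component via a first-exit time and compare $\mu$-masses. Your upper bound (disjoint John domains of area $\simeq(\diam C_k)^2$, bounded overlap at each dyadic scale, geometric summation) is essentially the content of Lemma \ref{Lemma - Mereknov - Sabitova}(a), reproved inline; your lower bound is, after unwinding, a reproof of homogeneity condition $(2)$. So the route is genuinely different in architecture; what the paper's route buys is modularity (the homogeneity conditions are of independent geometric interest, cf.\ Proposition \ref{Proposition - Homogeneous}), while yours is more self-contained dynamically and avoids introducing the homogeneity formalism.

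There are two places where your sketch is genuinely underspecified and would need the same kind of work the paper puts into Lemma \ref{Lemma - elevator}. First, the local estimate $\diam C_k\simeq\diam W_k$ is not an immediate consequence of ``bounded-degree distortion'': with $\xi_k\in\partial f^{m}(D_k)$ the set $f^{m}(D_k)$ is \emph{not} compactly contained in $B(\xi_k,r_0')$, so the modulus/Gr\"otzsch argument that would pin down the relative size of $D_k$ inside $W_k$ needs to be set up carefully (e.g.\ enlarge the radius so that $\overline{f^m(D_k)}\subset B(\xi_k,r_0')$ and then invoke the John property of $f^m(D_k)$ to bound $\md\bigl(B(\xi_k,r_0')\setminus\overline{f^m(D_k)}\bigr)$ from above, plus a degree-$D$ modulus comparison as in Lemma \ref{Lemma - modulus}). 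The paper sidesteps this by pulling back \emph{balls} $B(p,2\diam C_k)$ rather than the components $W_k$, and by proving the one-sided H\"older estimate (a) and Lipschitz estimate (c) of Lemma \ref{Lemma - elevator} in that ball-centered normalization. Second, your ``nested-or-boundedly-overlapping'' claim for the first-exit pieces in the lower bound is asserted but not established, and it is not automatic: distinct points can stop at pieces that intersect without nesting. A cleaner route, and essentially what the paper does, is to replace the stopping-time family with a maximal $\delta$-separated set in $\J(f)$ (which has $\simeq\delta^{-s}$ points by Ahlfors regularity) and then show that each ball $B(z_i,2\delta)$ contains a Fatou component of diameter $\simeq\delta$ -- this is precisely condition $(2)$, proved with Lemma \ref{Lemma - elevator}(a),(b),(c) -- followed by the bounded-multiplicity count of Lemma \ref{Lemma - Mereknov - Sabitova}(b). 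In short, the scaffolding is right and the obstacles you flag are the real ones, but both flagged steps reduce to the distortion content that the paper isolates in the conformal elevator, so the work hasn't been avoided, only relocated.
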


It is remarkable that the curvature distribution function has polynomial growth. As a consequence, we have the following corollary.

\begin{corollary}\label{Corollary} Under the assumptions of Theorem \ref{Theorem - Main} we have
\begin{align*}
	\lim_{x\to\infty} \frac{\log N(x)}{\log x} = E= \dim_H \J(f),
\end{align*}
where $N$ is the curvature distribution function, and $E$ is the exponent of the packing of the Fatou components of $f$.
\end{corollary}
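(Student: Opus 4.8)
The plan is to deduce both equalities directly from Theorem~\ref{Theorem - Main}, so that the corollary becomes a formal consequence. Write $s=\dim_H \J(f)$ and $d_k=\diam C_k$. By Theorem~\ref{Theorem - Main} there are constants $0<c_1\le c_2<\infty$ with $c_1 x^s \le N(x)\le c_2 x^s$ for all sufficiently large $x$; in particular $N(x)$ is finite for every $x$, so the sums below make sense. Taking logarithms gives $\log N(x)=s\log x+O(1)$ as $x\to\infty$, and dividing by $\log x$ yields $\lim_{x\to\infty}\log N(x)/\log x=s$. This disposes of the first equality, and it remains to identify the exponent $E$ with $s$.

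For the bound $E\le s$, fix any $t>s$ and estimate $\sum_k d_k^t$ by grouping the indices dyadically according to the size of $d_k$. The number of $k$ with $2^{-(j+1)}\le d_k<2^{-j}$ is at most $N(2^{j+1})\le c_2 2^{(j+1)s}$, and each such term contributes at most $2^{-jt}$; the finitely many $k$ with $d_k\ge 1$ contribute a finite amount since the $d_k$ are uniformly bounded. Hence $\sum_k d_k^t\lesssim \sum_{j\ge 0}2^{(j+1)s}2^{-jt}\simeq\sum_{j\ge 0}2^{-j(t-s)}<\infty$. Thus every $t>s$ lies in the set defining $E$, so $E\le s$.

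For the reverse bound $E\ge s$, fix $t<s$ and show $\sum_k d_k^t=\infty$. One cannot naively subtract $N(2^{j})$ from $N(2^{j+1})$, since the constants $c_1,c_2$ need not separate on unit dyadic scales; instead pass to a coarser geometric scale, choosing an integer $\lambda\ge 2$ large enough that $c_1\lambda^s\ge 2c_2$. Then for large $j$,
\[
N(\lambda^{j+1})-N(\lambda^{j})\ \ge\ c_1\lambda^{(j+1)s}-c_2\lambda^{js}\ \ge\ \tfrac12 c_1\lambda^{(j+1)s},
\]
so there are at least $\tfrac12 c_1\lambda^{(j+1)s}$ indices $k$ with $\lambda^{-(j+1)}\le d_k<\lambda^{-j}$, each contributing $d_k^t\ge\lambda^{-(j+1)t}$. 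Summing over $j$ gives $\sum_k d_k^t\gtrsim\sum_j\lambda^{(j+1)(s-t)}=\infty$. Hence no $t<s$ lies in the set defining $E$, so $E\ge s$. Combining the two inequalities, $E=s=\dim_H\J(f)$, and together with the first part the corollary follows.

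I expect no genuine obstacle here beyond Theorem~\ref{Theorem - Main} itself, which carries all the content; the only mild subtlety is the passage to the coarse scale $\lambda$ in the lower bound, forced by the fact that a two-sided estimate $N(x)\simeq x^s$ does not by itself control the increments of $N$ on a fixed dyadic scale. It is worth recording that the upper bound $E\le s$ uses only the finiteness of $\limsup N(x)/x^s$, while $E\ge s$ uses only the positivity of $\liminf N(x)/x^s$.
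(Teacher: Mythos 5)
Your proof is correct and follows the same overall strategy as the paper: deduce both equalities from the two-sided bound $N(x)\simeq x^s$ of Theorem~\ref{Theorem - Main}. The first equality and the upper bound $E\le s$ (dyadic grouping, using that each shell $\{k: 2^{-(j+1)}\le d_k<2^{-j}\}$ has at most $N(2^{j+1})\lesssim 2^{(j+1)s}$ elements) match the paper essentially verbatim. Where you diverge is the lower bound $E\ge s$. You perceive an obstacle --- that one must control increments $N(\lambda^{j+1})-N(\lambda^{j})$, which a two-sided estimate on $N$ does not immediately give --- and you circumvent it by passing to a coarse base $\lambda$ with $c_1\lambda^s\ge 2c_2$ so that the increments are provably large. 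This is correct, but the obstacle is illusory: the paper avoids increments entirely by bounding the partial sum from below by its smallest term times the number of terms,
\[
\sum_{\substack{k\ge 0\\ d_k\ge 1/n}} d_k^{\,t}\ \ge\ \frac{N(n)}{n^t}\ \ge\ \frac{1}{C}\,n^{s-t}\ \longrightarrow\ \infty\qquad(0<t<s),
\]
which needs only the lower bound on $N$ and no separation of the constants. Your version is a valid alternative but heavier than necessary. One small point, which the paper states and your writeup leaves implicit: the estimate $d_k^{\,t}\ge\lambda^{-(j+1)t}$ assumes $t>0$; for $t\le 0$ divergence of $\sum_k d_k^{\,t}$ is immediate because infinitely many $d_k$ are less than $1$.
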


This essentially says that one can compute the Hausdorff dimension of the Julia set just by looking at the diameters of the (countably many) Fatou components, which lie in the complement of the Julia set. 

The study of the curvature distribution function and the terminology is motivated by the \textit{Apollonian circle packings}.

An \textit{Apollonian circle packing} is constructed inductively as follows. Let $C_1,C_2,C_3$ be three mutually tangent circles in the plane with disjoint interiors. Then by a theorem of Apollonius there exist exactly two circles that are tangent to all three of $C_1,C_2,C_3$. We denote by $C_0$ the outer circle that is tangent to $C_1,C_2,C_3$ (see Figure \ref{fig:apollonian}). For the inductive step we apply Apollonius's theorem to all triples of  mutually tangent circles of the previous step. In this way, we obtain a countable collection of circles $\{C_k\}_{k\geq 0}$. We denote by $\mathcal P= \{C_k\}_{k\geq 0}$ the Apollonian circle packing constructed this way. If $r_k$ denotes the radius of $C_k$, then $r_k^{-1}$ is the curvature of $C_k$. The curvatures of the circles in Apollonian packings are of great interest in number theory because of the fact that if the four initial circles $C_0,C_1,C_2,C_3$ have integer curvatures, then so do all the rest of the circles in the packing. Another interesting fact is that if, in addition, the curvatures of all circles  in the packing share no common factor greater than one, then there are infinitely many circles in the packing with curvature being a prime number. For a survey on the topic see \cite{Oh}.

In order to study the curvatures of an Apollonian packing $\mathcal P$ one defines the \textit{exponent} $E$ of the packing by
\begin{align*}
E= \inf \biggl\{ t\in \R : \sum_{k\geq 0} r_k^t <\infty \biggr\}
\end{align*}
and the \textit{curvature distribution function} associated to $ \mathcal P$ by
\begin{align*}
N(x)= \# \{ k : r_k^{-1}\leq x\}
\end{align*}
for $x>0$. We remark here that the radii $r_k$ are measured with the Euclidean metric of the plane, in contrast to \eqref{N(x) -Definition} where we use the spherical metric. Let $D_k$ be the open ball enclosed by $C_k$. The \textit{residual set} $\mathcal S$ of a packing $\mathcal P$ is defined by $\br {D_0} \setminus \bigcup_{k\geq 1} D_k$. The set $\mathcal S$ has fractal nature and its Hausdorff dimension $s=\dim_H \mathcal S$ is related to $N(x)$ and $E$ by the following result of Boyd.
\begin{theorem}[{{\cite{Bo1}, \cite{Bo2}}}] \label{Theorem - Boyd}
If $\mathcal P$ is an Apollonian circle packing, then
\begin{align*}
\lim_{x\to\infty} \frac{\log N(x)}{\log x}=E=\dim_H\mathcal S.
\end{align*} 
\end{theorem}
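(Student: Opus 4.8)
The plan is to establish the two equalities in turn: first the soft identity $E=\limsup_{x\to\infty}\log N(x)/\log x$; then that $\liminf$ equals $\limsup$ here, which uses the quasi-self-similar structure of the packing; and finally $E=\dim_H\mathcal S$, via a covering argument for the upper bound and a mass-distribution argument for the lower bound. Two ``bounded geometry'' facts about the Apollonian packing $\mathcal P$ would be used throughout: (i) the open disks $D_k$ are pairwise disjoint and contained in the bounded set $\br D_0$; and (ii) for a curvilinear triangle $T$ bounded by three mutually tangent circles of $\mathcal P$, the inscribed Apollonius circle has radius $\rho_T$ with $\rho_T\simeq\diam T\simeq\sqrt{\Area T}$, uniformly in $T$ --- a consequence of the Descartes circle relation after normalizing the triple by a M\"obius map to the symmetric reference configuration of three equal mutually tangent circles.

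\emph{Step 1: $E=\limsup_{x\to\infty}\log N(x)/\log x$.} By (i), only finitely many $D_k$ have radius at least $1/x$, so $N(x)<\infty$. Ordering the radii $r_0\ge r_1\ge\cdots$ and integrating by parts, $\sum_k r_k^{t}=t\int_0^\infty N(x)\,x^{-t-1}\,dx$, the boundary terms vanishing because $r_0$ is the largest radius. Set $\beta:=\limsup_{x\to\infty}\log N(x)/\log x$. If $t>\beta$, choose $\beta<t'<t$; then $N(x)\le x^{t'}$ for large $x$, so the integral converges and $E\le t$. If $t<\beta$, choose $t<t'<\beta$ and a sparse sequence $x_n\to\infty$ with $N(x_n)\ge x_n^{t'}$; then $x_n^{t'-t}\lesssim\int_{x_n}^{2x_n}N(x)\,x^{-t-1}\,dx$, so the integral diverges and $E\ge t$. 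Hence $E=\beta$; this step uses only that the disks are disjoint.

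\emph{Step 2: the limit exists.} Every triple of mutually tangent circles of $\mathcal P$, together with its Apollonius circle and the three resulting child triangles, is the image of the reference configuration under a M\"obius map which can be normalized to lie in a compact family, hence has uniformly bounded distortion on the triangle $T$ it bounds. Writing $\nu_T(x)$ for the number of circles of $\mathcal P$ inside $T$ of radius at least $1/x$, this yields $\nu_T(x)\simeq\nu_{T_0}(\rho_T x)$ with constants independent of $T$ (where $T_0$ is the reference triangle), together with $\nu_T(x)=1+\sum_{i=1}^3\nu_{T_i}(x)$. Summing over the finitely many triangles of the first subdivision of $\br D_0$ expresses $N(x)$, up to bounded factors, as a superposition of rescaled copies of a single profile, and this gives sub- and super-multiplicativity $N(xy)\simeq N(x)\,N(y)$. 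Fekete's lemma then makes $\tfrac1n\log N(2^n)$ convergent, and since $N$ is doubling the full limit $\lim_{x\to\infty}\log N(x)/\log x$ exists; by Step 1 it equals $E$. (The finer renewal analysis would even give $N(x)\simeq x^{E}$, but this is not needed here.)

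\emph{Step 3: $E=\dim_H\mathcal S$.} For the upper bound, for each $n$ cover $\mathcal S$ by the curvilinear triangles making up $\br D_0\setminus\bigcup D_k$, the union taken over the circles of the first $n$ generations: these are finitely many, and by (ii) together with the geometric decay of areas across subdivisions, $\max_T\diam T\to0$ as $n\to\infty$. Since $\diam T\simeq\rho_T$ and the $\rho_T$ are the radii of the generation-$(n{+}1)$ circles, for $t>E$ we obtain $\mathcal H^t_\delta(\mathcal S)\le\sum_T(\diam T)^t\lesssim\sum_k r_k^{t}<\infty$ with $\delta=\max_T\diam T\to0$; hence $\mathcal H^t(\mathcal S)<\infty$ and $\dim_H\mathcal S\le E$. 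For the lower bound, fix $t<E$ and build a mass distribution $\mu$ on $\mathcal S$ by splitting the mass of each triangle $T$ among its children $T_i$ in proportion to $(\diam T_i)^t$. Because one of the children is always comparable in size to $T$ (by the Descartes relation its smallest bounding radius is $\rho_T\simeq\diam T$), one has $(\diam T)^t\lesssim\sum_i(\diam T_i)^t$; combined with the fact that for $t<E$ the $m$-step subdivision sums actually exceed $(\diam T)^t$ once $m$ is large (the transfer operator has spectral radius greater than $1$ below the critical exponent), the ratio $\mu(T)/(\diam T)^t$ stays bounded down the subdivision tree, so $\mu(T)\lesssim(\diam T)^t$ for every $T$. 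A disk of radius $\delta$ meets only boundedly many stage-$n$ triangles of diameter $\simeq\delta$, so $\mu(B(x,\delta))\lesssim\delta^t$, and the mass distribution principle gives $\dim_H\mathcal S\ge t$. Letting $t\uparrow E$ finishes the proof.

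\emph{The main obstacle.} The heart of the matter is that the Apollonian packing is only \emph{quasi}-self-similar: the M\"obius maps identifying curvilinear triangles distort ratios, so the clean identities used above ($\nu_T(x)\simeq\nu_{T_0}(\rho_Tx)$, submultiplicativity of $N$, and the critical relation for the subdivision sums at $t=E$) hold only up to multiplicative constants, which one must prevent from accumulating over an unboundedly deep subdivision tree, and the mismatch between combinatorial generation and metric scale must be reconciled. This calls for genuine bounded-distortion estimates for the normalized M\"obius maps and for control of the geometry near the tangency points, which are parabolic fixed points of the Apollonian group. A streamlined alternative is to present $\mathcal S$ as the limit set of a parabolic conformal iterated function system, so that $\dim_H\mathcal S$ is the zero of the topological pressure (Bowen's formula) --- equal to the critical exponent $E$ of the series $\sum_k r_k^{t}$ since the word derivative norms are comparable to the radii --- and $N(x)\simeq x^{E}$ follows from the renewal theorem for such systems.
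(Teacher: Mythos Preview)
The paper does not give its own proof of this theorem: Theorem~\ref{Theorem - Boyd} is quoted as background from Boyd's papers \cite{Bo1}, \cite{Bo2}, so there is no argument in the present paper to compare against. Your proposal is therefore being measured against the literature rather than against this paper.

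As an outline your three steps are the right architecture and match the spirit of Boyd's work: the Abel-summation identity in Step~1 is correct and gives $E=\limsup\log N(x)/\log x$ for any disjoint packing; the covering argument in Step~3 for $\dim_H\mathcal S\le E$ via curvilinear triangles is essentially Boyd's; and the lower bound via a mass distribution adapted to the subdivision tree is the natural modern formulation. You are also right to single out the genuine difficulty: the M\"obius symmetries of the Apollonian group are not similarities, so the per-step comparability constants in Steps~2 and~3 threaten to compound over depth. Your Step~2, in particular, asserts $N(xy)\simeq N(x)N(y)$ with uniform constants, which is stronger than what a single bounded-distortion step gives; one needs either a renewal/transfer-operator argument (as you allude to at the end) or Boyd's more hands-on analysis of the radius recursion to push this through. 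Likewise, in Step~3 the claim that $\mu(T)/(\diam T)^t$ stays bounded hinges on the spectral-radius statement you invoke, and that is where the real work lies.

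In short: the plan is sound and well-informed, and you have correctly located the crux, but Steps~2 and the lower half of Step~3 are presently programmatic rather than proofs. Since the paper itself only cites Boyd for this result, there is nothing further to reconcile here.
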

Recently, Kontorovich and Oh proved the following stronger version of this theorem:
\begin{theorem}[{{\cite[Theorem 1.1]{KO}}}] \label{Theorem - Kontor-Oh}
If $\mathcal P$ is an Apollonian circle packing, then
\begin{align*}
\lim_{x\to\infty} \frac{N(x)}{x^s} \in (0,\infty),
\end{align*}
where $s= E = \dim_H \mathcal S$. In particular, $N(x)\simeq x^s$.
\end{theorem}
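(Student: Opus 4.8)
The plan is to reproduce the dynamical argument of Kontorovich and Oh: realize the packing as an orbit of a thin Kleinian group, reinterpret the curvature count as a matrix-norm count in that group, and then invoke equidistribution for the geodesic flow on the associated infinite-volume hyperbolic $3$-manifold to extract the sharp asymptotic.

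\textbf{Step 1: group-theoretic realization.} The bounded Apollonian packing $\mathcal P$ generated by $C_0,C_1,C_2,C_3$ is a finite union of $\Gamma$-orbits of these four circles, where $\Gamma=\mathcal A$ is the Apollonian group, generated geometrically by the four inversions in the circles dual to $C_0,C_1,C_2,C_3$. Viewing $\Gamma$ as a discrete subgroup of $\mathrm{Isom}(\mathbb H^3)$ acting on $\partial\mathbb H^3=\widehat{\mathbb C}$, it is geometrically finite, Zariski dense, and of infinite covolume. Its limit set $\Lambda(\Gamma)$ is precisely the residual set $\mathcal S$, so by Patterson--Sullivan theory $\dim_H\mathcal S=\delta(\Gamma)$, the critical exponent, and for the bounded packing $\delta\in(1,2)$. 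The equality $E=\dim_H\mathcal S$ is Boyd's Theorem \ref{Theorem - Boyd} (and, conversely, follows once $N(x)\simeq x^{\dim_H\mathcal S}$ is known, by comparing $\sum_k r_k^t$ with $\int^{\infty} x^{s-1-t}\,dx$); thus the only new content is the sharp asymptotic $N(T)\sim c_{\mathcal P}\,T^{\delta}$.

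\textbf{Step 2: from curvatures to a norm count.} Each circle in $\widehat{\mathbb C}$ corresponds to a coset in $G/H$, where $G$ is $\mathrm{PSL}_2(\mathbb C)$ enlarged by complex conjugation and $H$ is the stabilizer of a fixed reference circle $\widehat C$. One checks that the Euclidean curvature of the circle $g\widehat C$ is comparable to a concrete function $\Xi(g)$ — essentially the squared Euclidean norm of a row of a matrix representative, equivalently $e^{\beta(g)}$ for a Busemann-type cocycle — which is left-invariant under a maximal compact $K$ and transforms in a controlled way under the right $H$-action. Hence
\begin{align*}
N(T)\;=\;\sum_{i}\ \#\{\gamma\in\Gamma/(\Gamma\cap H_i):\Xi_i(\gamma)<T\}\;+\;O(1),
\end{align*}
a sum over the finitely many $\Gamma$-orbits in $\mathcal P$; in the Cartan $KA^+K$ coordinates the sublevel sets $\{\Xi_i<T\}$ are sectors with smooth boundary.

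\textbf{Step 3: equidistribution.} Equip $T^1(\Gamma\backslash\mathbb H^3)$ with the Bowen--Margulis--Sullivan measure $m^{\mathrm{BMS}}$ built from the Patterson--Sullivan density $\{\mu_x\}$ of dimension $\delta$. Since $\Gamma$ is geometrically finite, $|m^{\mathrm{BMS}}|<\infty$ and the geodesic flow is mixing for $m^{\mathrm{BMS}}$ (equivalently, by Lax--Phillips and Sullivan there is a spectral gap for the Laplacian on $\Gamma\backslash\mathbb H^3$, using $\delta>1$). By Roblin's orbit-counting/equidistribution theorem — or directly, by slicing the count along expanding horospheres and applying mixing — the $\Gamma$-orbit of $\widehat C$ equidistributes in the space of circles against a measure assembled from $m^{\mathrm{BMS}}$ and $\mu$. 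Translating the constraint $\{\Xi_i(\gamma)<T\}$ into this space exhibits it as a family of boxes $B_T$; these are \emph{well-rounded}, i.e.\ their boundaries are null for the relevant product of $\mu$ with a transverse measure, because $\mu$ is non-atomic and $\Lambda(\Gamma)$ is a genuine fractal not contained in any circle. Mixing then yields $\#\{\gamma\in\Gamma/(\Gamma\cap H_i):\Xi_i(\gamma)<T\}\sim c_i\,T^{\delta}$ with $c_i>0$ an integral of $m^{\mathrm{BMS}}$ against $\mu$ over the reference data; summing over $i$ gives $N(T)\sim c_{\mathcal P}\,T^{\delta}$ with $c_{\mathcal P}\in(0,\infty)$, which is the assertion (with $s=\delta=E=\dim_H\mathcal S$).

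\textbf{Main obstacle.} The heart of the argument is Steps 2--3: pinning down the exact function $\Xi$ on $\Gamma$ whose sublevel sets cut out ``curvature $<T$,'' and proving that those sublevel sets are well-rounded for the \emph{singular} Patterson--Sullivan measure governing equidistribution in infinite volume. This is exactly where geometric finiteness and the inequality $\delta>1$ are indispensable, since they supply the finiteness of $m^{\mathrm{BMS}}$ and the mixing needed to upgrade soft two-sided bounds $N(T)\asymp T^{\delta}$ into a genuine asymptotic with a strictly positive leading constant. Computing $c_{\mathcal P}$ explicitly in terms of the Apollonian group and the initial bend configuration is possible but is not required for the stated conclusion.
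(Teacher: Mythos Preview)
The paper does not supply its own proof of this theorem: it is quoted verbatim as \cite[Theorem 1.1]{KO} and used only as background motivation, with no argument given. Your outline is a faithful high-level sketch of the Kontorovich--Oh method itself (Apollonian group as a geometrically finite Kleinian group, curvature count reinterpreted as an orbit/norm count, Patterson--Sullivan and Bowen--Margulis--Sullivan machinery, mixing and equidistribution to obtain the asymptotic), so in that sense it matches what the paper \emph{cites} rather than anything the paper \emph{does}. There is nothing to compare at the level of the present paper; if the intent was to supply the omitted proof, your sketch is on target, though of course the genuine work lies in the precise formulation of the well-roundedness condition and the equidistribution theorem in infinite volume, which you correctly flag as the main obstacle.
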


\begin{figure}
	\begin{overpic}[width=0.5\textwidth]{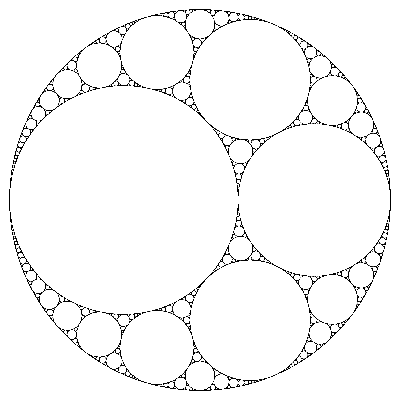}
 		\put (82,82) {$C_0$}
 		\put (55,85) {$C_1$}
 		\put (30,68) {$C_2$}
 		\put (75,60) {$C_3$}
	\end{overpic}
	\captionof{figure}{An Apollonian circle packing.}
  	\label{fig:apollonian}
\end{figure}

In \cite{MS}, Merenkov and Sabitova observed that the curvature distribution function $N(x)$ can be defined also for other planar fractal sets such as the Sierpi\'nski gasket and  Sierpi\'nski carpets. More precisely, if $\{C_k\}_{k\geq 0}$ is a collection of topological circles in the plane, and $D_k$ is the open topological disk enclosed by $C_k$, such that $ {D_0}$ contains $C_k$ for $k\geq 1$, and $D_k$ are disjoint for $k\geq 1$, one can define the \textit{residual set } $\mathcal S$ of the \textit{packing} $\mathcal P= \{C_k\}_{k\geq 0}$ by $\mathcal S= \br {D_0} \setminus  \bigcup_{k\geq 1} D_k$. A fundamental result of Whyburn implies that if the disks $\br {D_k}$, $k\geq 1$ are disjoint with $\diam D_k\to 0$ as $k\to\infty$ and $\mathcal S$ has empty interior, then $\mathcal S$ is homeomorphic to the standard Sierpi\'nski carpet \cite{Wh}. In the latter case we say that $\mathcal S$ is a \textit{Sierpi\'nski carpet} (see Figure \ref{fig:test2} for a Sierpi\'nski carpet Julia set). One can define the curvature of a topological circle $C_k$ as $(\diam C_k)^{-1}$. Then the \textit{curvature distribution function} associated to $\mathcal P$ is defined as in \eqref{N(x) -Definition} by $N(x)= \#\{k: (\diam C_k)^{-1} \leq x\}$ for $x>0$. Similarly, the exponent $E$ of $\mathcal P$ is defined as in \eqref{E - Definition}. 

In general, the limit $\lim_{x\to\infty} {\log N(x)}/{\log x}$ does not exist, but if we impose further restrictions on the geometry of the circles $C_k$, then we can draw conclusions about the limit. To this end, Merenkov and Sabitova introduced the notion of \textit{homogeneous planar sets} (see Section \ref{Section - homogeneous} for the definition). However, even these strong geometric restrictions are not enough to guarantee the existence of the limit. The following theorem hints that a self-similarity condition on $\mathcal S$ would be sufficient for our purposes.

\begin{theorem}[{{\cite[Theorem 6]{MS}}}]\label{Theorem - Merenkov-Sabitova} Assume that $f$ is a hyperbolic rational map whose Julia set $\J(f)$ is a Sierpi\'nski carpet. Then
\begin{align*}
\lim_{x\to\infty} \frac{\log N(x)}{\log x}= E=\dim_H \J(f),
\end{align*}
where $N$ is the curvature distribution function and $E$ is the exponent of the packing of the Fatou components of $f$.
\end{theorem}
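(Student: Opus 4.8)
My first move would be to notice that this statement follows at once from Theorem~\ref{Theorem - Main} and Corollary~\ref{Corollary}: a hyperbolic rational map has no parabolic cycles and no critical points in $\J(f)$, so it is in particular semi-hyperbolic, and a Sierpi\'nski carpet $\J(f)$ is connected with infinitely many (simply connected) Fatou components, so the hypotheses of Theorem~\ref{Theorem - Main} are met; the conclusion of Corollary~\ref{Corollary} is then precisely the assertion to be proved, the engine being $N(x)\simeq x^{\dim_{H}\J(f)}$ from Theorem~\ref{Theorem - Main}. In a paper built around Theorem~\ref{Theorem - Main} I would simply quote it. Since the hyperbolic case is nonetheless the prototype for that theorem, let me also outline the self-contained argument that isolates the mechanism.

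The plan there is to exploit the \emph{uniform conformal expansion} that hyperbolicity provides, in contrast to the merely semi-hyperbolic case. I would fix a conformal metric $\sigma$ on a neighborhood $U$ of $\J(f)$ and constants $1<\gamma\le\Gamma<\infty$ with $\gamma\le|f'|_{\sigma}\le\Gamma$ on $U$, and shrink $U$ to be disjoint from the postcritical set (possible precisely because $f$ is hyperbolic). Then every inverse branch of every iterate $f^{n}$ is a single-valued map on $U$, and Koebe's theorem makes it have bounded distortion with constants independent of $n$ and of the branch, and derivative comparable to some $\lambda\in[\Gamma^{-n},\gamma^{-n}]$. The second ingredient is the dynamical description of the packing: every Fatou component $D_k$ is an iterated preimage of one of the finitely many immediate basins of attraction, say $D_k=\phi_k(D_0)$ for the inverse branch $\phi_k$ of the minimal iterate $f^{n_k}$ with $f^{n_k}(D_k)=D_0$. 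Since $f^{n_k}$ has no critical points on $\J(f)$, it is unbranched on $C_k=\partial D_k\subset\J(f)$, and the distortion estimate on $U$ gives $\diam C_k\simeq|(f^{n_k})'(\zeta_k)|_{\sigma}^{-1}\in[\Gamma^{-n_k},\gamma^{-n_k}]$ for any $\zeta_k\in C_k$, uniformly in $k$. Thus $\{C_k\}$ becomes, in a uniform quantitative sense, a self-similar family realized by the inverse branches of the iterates.

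The counting then proceeds from both sides. One inequality is free: $N(x)=\#\{k:\diam C_k\ge 1/x\}\le x^{t}\sum_{k}(\diam C_k)^{t}$ for every $t>E$, so $\limsup_{x\to\infty}\log N(x)/\log x\le E$. For the reverse inequality and for the evaluation of $E$, I would use a Moran cover adapted to the dynamics: for small $\epsilon>0$, the inverse branches of iterates whose derivative first drops to scale $\simeq\epsilon$ form a finite family $\mathcal A_{\epsilon}$ whose images have size $\simeq\epsilon$, cover $\J(f)$, and overlap with uniformly bounded multiplicity, so the classical Ahlfors $s$-regularity of hyperbolic Julia sets (equivalently, Bowen's formula, with $s=\dim_{H}\J(f)$ the unique zero of $t\mapsto P(-t\log|f'|_{\sigma})$) gives $\#\mathcal A_{\epsilon}\simeq\epsilon^{-s}$. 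Each branch in $\mathcal A_{\epsilon}$ produces a Fatou component of diameter $\simeq\epsilon$, distinct branches giving distinct components up to bounded multiplicity, and conversely every Fatou component of diameter $\simeq\epsilon$ comes from a branch of scale $\simeq\epsilon$; hence $\#\{k:\diam C_k\simeq\epsilon\}\simeq\epsilon^{-s}$. Summing over dyadic scales then yields $N(x)\simeq x^{s}$ (the sharp form, in line with Theorem~\ref{Theorem - Main}) and $\sum_{k}(\diam C_k)^{t}<\infty$ if and only if $t>s$, that is $E=s$; combining everything, $\lim_{x\to\infty}\log N(x)/\log x=E=\dim_{H}\J(f)$.

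The crux --- the step I would expect to fight hardest for --- is the two-sided estimate $\#\{k:\diam C_k\simeq\epsilon\}\simeq\epsilon^{-s}$, since both the identification of the exponent with $\dim_{H}\J(f)$ and the lower bound for $N$ rest on the dynamical self-similarity: the inverse branches of the iterates must provide, at every scale, a bounded-geometry copy of $\J(f)$ inside itself, and, as the introduction notes, the weaker ``homogeneity'' of the packing by itself does not suffice. It is exactly this mechanism --- together with the bounded-distortion control, which degenerates near non-recurrent critical points and there must be replaced by more delicate local analysis --- whose extension beyond the hyperbolic setting constitutes the real work behind Theorem~\ref{Theorem - Main}.
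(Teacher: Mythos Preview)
Your opening paragraph is exactly right, and it is also the only sense in which the paper ``proves'' this theorem: Theorem~\ref{Theorem - Merenkov-Sabitova} is quoted from \cite{MS} as prior work, and the paper gives no separate argument for it; within the paper it is simply subsumed by Corollary~\ref{Corollary}, for the reasons you state (hyperbolic $\Rightarrow$ semi-hyperbolic, Sierpi\'nski carpet $\Rightarrow$ connected Julia set with infinitely many Fatou components). So on the level of ``what the paper does,'' your first paragraph already matches it completely.

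Your self-contained sketch for the hyperbolic case is a genuinely different route from the paper's proof of the general Theorem~\ref{Theorem - Main}. The paper never works directly with Moran covers or Bowen's formula; instead it (i) proves the Conformal Elevator distortion estimates (Lemma~\ref{Lemma - elevator}), (ii) uses these together with Falconer's quasi-self-similarity criterion and results of Sumi--Urba\'nski to get $n(\varepsilon)\varepsilon^{s}\simeq 1$ (Theorem~\ref{Theorem - Hausdorff measure positive}), (iii) verifies that $\J(f)$ is a \emph{homogeneous} set in the sense of \cite{MS} (conditions $(1),(2),(4)$), and finally (iv) invokes the packing lemma $N(\beta/\varepsilon)\simeq n(\varepsilon)$ (Lemma~\ref{Lemma - N - n}) to transfer the estimate from balls to Fatou components. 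Your approach bypasses the homogeneous-set machinery and the $N\!\leftrightarrow\! n$ transfer entirely, counting Fatou components directly via inverse branches; this is cleaner in the hyperbolic case because every inverse branch is univalent with uniform Koebe control, which is precisely what fails in the semi-hyperbolic setting and forces the paper's more elaborate detour. The step you flag as the crux --- that distinct Moran branches at scale $\varepsilon$ yield distinct Fatou components up to bounded multiplicity, and conversely --- is indeed the one needing care, but in the hyperbolic Sierpi\'nski-carpet case it goes through for the reasons you indicate.
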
 
The authors made the conjecture that for such Julia sets we actually have an analogue of Theorem \ref{Theorem - Kontor-Oh}, namely $\lim_{x\to\infty}N(x)/x^s \in (0,\infty)$, where $s=\dim_H \J(f)$. Note that Theorem \ref{Theorem - Main} partially addresses the issue by asserting that $N(x)\simeq x^s$. However, we believe that the limit $\lim_{x\to\infty}N(x)/x^s $ does not exist in general for Julia sets. Observe that the conclusion of Theorem \ref{Theorem - Main} remains valid if we alter the metric that we are using in the definition of $N(x)$ in a bi-Lipschitz way. For example, if the Julia set $\J(f)$ is contained in the unit disk of the plane we can use the Euclidean metric instead of the spherical. On the other hand, the limit of $N(x)/x^s$ as $x\to\infty$ is much more sensitive to changes of the metric. The following simple example of the \textit{standard Sierpi\'nski carpet} provides some evidence that the limit will not exist even for packings with very ``nice" geometry. 

The standard Sierpi\'nski carpet is constructed as follows. We first subdivide the unit square $[0,1]^2$ into $9$ squares of equal size and then remove the interior of the middle square. We continue subdividing each of the remaining $8$ squares into $9$ squares, and proceed inductively. The resulting set $\mathcal S$ is the standard Sierpi\'nski carpet and its Hausdorff dimension is $s= \log8 /\log 3$. The set $\mathcal S$ can be viewed as the residual set of a packing $\mathcal P= \{C_k\}_{k\geq 0}$, where $C_0$ is the boundary of the unit square, and $C_k,k\geq 1$ are the boundaries of the squares that we remove in each step in the construction of $\mathcal S$. Using the Euclidean metric, note that for each $n\in \N$ the quantity $N(3^n/ \sqrt{2})$ is by definition the number of curves $C_k$ that have diameter at least $\sqrt{2}/3^n$. Thus, 
$$N(3^n/ \sqrt{2})= 1+ 1+8^1+8^2 \dots +8^{n-1}= 1+ \frac{8^n-1}{7}$$
(note that we also count $C_0$). Since $3^{n\cdot s}= 8^n$, we have
\begin{align*}
\lim_{n\to\infty} \frac{N(3^n/\sqrt{2})}{ (3^{n}/\sqrt{2})^s }= \frac{\sqrt{2^s}}{7}.
\end{align*}
On the other hand, it is easy to see that $N(3^n/\sqrt{2})= N( 3^n)$, since there are no curves $C_k$ with diameter in the interval $[\frac{1}{3^n},\frac{\sqrt{2}}{3^n})$. Thus, $\lim_{n\to\infty} N(3^n)/ (3^n)^s= 1/7$, and this shows that $\lim_{n\to\infty} N(x)/ x^s$ does not exist. In general, if one can show that there exists some constant $c>0, c\neq 1$ such that $N(x)=N(cx)$ for large $x$, then the limit will not exist.

\begin{figure}
\centering
\begin{minipage}{.5\textwidth}
  \centering
  \includegraphics[width=.6\linewidth]{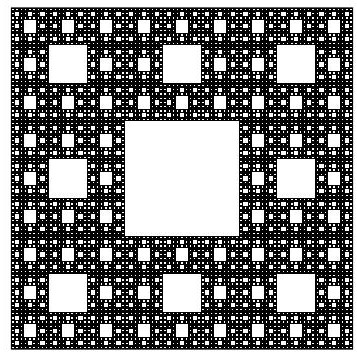}
  \captionof{figure}{The standard Sierpi\'nski carpet}
  \label{fig:test1}
\end{minipage}%
\begin{minipage}{.5\textwidth}
  \centering
  \begin{overpic}[width=0.6\textwidth]{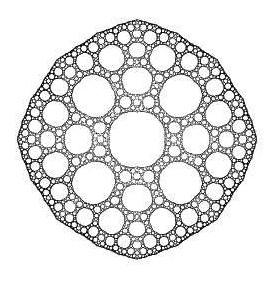}
 	\put (70,90) {$\C\setminus D_0$}
 	\put (44,48) {$D_k$}
  \end{overpic}
  \captionof{figure}{The Julia set of $f(z)=z^2-\frac{1}{16z^2}$}
  \label{fig:test2}
\end{minipage}
\end{figure}

We also note that in Theorem \ref{Theorem - Main} one might be able to weaken the assumption that $f$ is semi-hyperbolic, but the assumption that $f$ has connected Julia set is necessary, since there exist rational maps whose Fatou components (except for two of them) are nested annuli, and in fact in this case there exist infinitely many Fatou components with ``large" diameters (see \cite[Proposition 7.2]{Mc}). Thus, if $N(x)$ is the number of Fatou components whose diameter is at least $1/x$, we would have $N(x)=\infty$ for large $x$.

The proof of Theorem \ref{Theorem - Main} will be given in two main steps. In Section \ref{Section - Minkowski}, using the self-similarity of the Julia set we will establish relations between the Hausdorff dimension of the Julia set and its {Minkowski dimension} (see Section \ref{Section - Minkowski} for the definition). Then in Section \ref{Section - homogeneous} we will observe that the Julia sets of semi-hyperbolic maps are homogeneous sets, satisfying certain geometric conditions (see Section \ref{Section - homogeneous} for the definition). These conditions allow one to relate the quantity $N(x)/x^s$ with the {Minkowski content} of the Julia set. Using these relations, and the results of Section \ref{Section - Minkowski}, the proof of Theorem \ref{Theorem - Main} will be completed.

Before proceeding to the above steps, we need some important distortion estimates for semi-hyperbolic rational maps that we establish in Section \ref{Section - Conformal elevator}, and we will refer to them as the {Conformal Elevator}. These are the key estimates that we will use in establishing geometric properties of the Julia set. Similar estimates have been established for sub-hyperbolic rational maps in \cite[Lemma 4.1]{BLM}.

\subsection*{Acknowledgements.} The author would like to thank his advisor, Mario Bonk, for many useful comments and suggestions, and for his patient guidance. He also thanks the anonymous referees for their careful reading of the manuscript and their thoughtful comments.

\section{Conformal elevator for semi-hyperbolic maps}\label{Section - Conformal elevator}

The heart of this section is Lemma \ref{Lemma - elevator} and the whole section is devoted to proving it.

Let $f:\widehat \C\to \widehat \C$ be a semi-hyperbolic map with $\J (f) \neq \widehat \C$; in particular, by Sullivan's classification and the fact that semi-hyperbolic rational maps have neither parabolic cycles (by definition) nor Siegel disks and Herman rings (\cite[Corollary]{Ma}), $f$ must have an attracting or superattracting periodic point. Conjugating $f$ by a rotation of the sphere $\widehat \C$, we may assume that $\infty$ is a periodic point in the Fatou set. Furthermore, conjugating again with a Euclidean similarity, we can achieve that $\J(f) \subset \frac{1}{2}\D$, where $\D$ denotes the unit disk in the plane. Note that these operations do not affect the conclusion of Theorem \ref{Theorem - Main}, since a rotation is an isometry in the spherical metric that we used in the definition of $N(x)$, and a scaling only changes the limits by a factor. Furthermore, since the boundaries $C_k$ of the Fatou components $D_k$ have been moved away from $\infty$, the diameters of $C_k$ in spherical metric are comparable to the diameters in the Euclidean metric. This easily implies that the conclusion of Theorem \ref{Theorem - Main} is not affected if we define $N(x)=\#\{k: (\diam C_k)^{-1}\leq x\}$ using instead the Euclidean metric for measuring the diameters.

In this section the Euclidean metric will be used in all of our considerations.

By semi-hyperbolicity (see \cite[Theorem II(b)]{Ma}) and compactness of $\J(f)$, there exists $\varepsilon_0>0$ such that for every $x\in \J(f)$ and for every connected component $W$ of $f^{-n}(B(x,\varepsilon_0))$ the degree of $f^n :W \to B(x,\varepsilon_0)$ is bounded by some fixed constant $D_0>0$ that does not depend on $x, W, n$. Furthermore, we can choose an even smaller $\varepsilon_0$ so that the open $\varepsilon_0$-neighborhood of $\J(f)$ that we denote by $N_{\varepsilon_0}(\J(f))$ is contained in $\D$, and avoids the poles of $f$ that must lie in the Fatou set. Then $f$ is uniformly continuous in $N_{\varepsilon_0/2} (\J(f))$ in the Euclidean metric, and in particular, there exists $\delta_0>0$ such that for any $U \subset N_{\varepsilon_0/2} (\J(f))$ with $\diam U < \delta_0$ we have $\diam f(U) <\varepsilon_0/2$. 

Let $p\in \J(f)$, $0<r\leq \delta_0/2$ be arbitrary, and define $B\coloneqq B(p,r)$. Since for large $N\in\N$ we have $f^N(B) \supset \J(f)$ (e.g. see \cite[Corollary 14.2]{Mil}), there exists a largest $n\in \N$ such that $\diam f^n(B) <\varepsilon_0/2 $. By the choice of $n$, we have $\diam f^{n+1}(B)\geq \varepsilon_0/2$. Using the uniform continuity and the choice of $\delta_0$, it follows that $\diam f^n(B) \geq \delta_0$, thus  
\begin{align}\label{diamf^n(B) bounds}
\delta_0 \leq  \diam f^n(B) \leq \varepsilon_0/2.
\end{align} 

We now state the main lemma.
\begin{lemma}\label{Lemma - elevator} There exist constants $\gamma,r_1, K_1,K_2>0$ independent of $B=B(p,r)$ (and thus of $n$) such that:
\begin{enumerate}[label=\emph{(\alph*)}]
\item If $A\subset B$ is a connected set, then
\begin{align*}
\frac{\diam A}{\diam B} \leq K_1  (\diam f^n(A))^\gamma.
\end{align*}
\item $B( f^n(p),r_1) \subset f^n(B(p,r/2) ).$
\item For all $u,v\in B$ we have
\begin{align*}
|f^n(u)-f^n(v)| \leq K_2 \frac{|u-v|}{\diam B}.
\end{align*}

\end{enumerate}
\end{lemma}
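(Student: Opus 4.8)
\emph{Model and plan.} The idea is to realise $f^n$ near $B$ as a finite Blaschke product of degree $\le D_0$ (after uniformisation) and to read off (a)--(c) from Koebe's theorem, the Schwarz--Pick lemma and a compactness argument, the decisive quantitative input being the lower bound $\diam f^n(B)\ge\delta_0$ from \eqref{diamf^n(B) bounds}. Concretely: since $\diam f^n(B)<\varepsilon_0/2$ and $f^n(p)\in f^n(B)$, we have $f^n(B)\subset B(f^n(p),\varepsilon_0/2)$; hence, with $W$ the component of $f^{-n}(B(f^n(p),\varepsilon_0))$ containing $p$, one has $B\subset W$ and, by the degree bound recalled above, $g:=f^n|_W\colon W\to B(f^n(p),\varepsilon_0)$ is proper of degree $m\le D_0$. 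These pull-back domains are simply connected (this is part of the bounded-degree structure of semi-hyperbolic maps, and is where non-recurrence of the critical points enters); let $\Theta\colon\D\to W$ be a Riemann map with $\Theta(0)=p$ and $\Phi\colon\D\to B(f^n(p),\varepsilon_0)$ the affine map with $\Phi(0)=f^n(p)$. Then $\beta:=\Phi^{-1}\circ g\circ\Theta$ is a finite Blaschke product of degree $m\le D_0$ with $\beta(0)=0$, and $f^n|_W=\Phi\circ\beta\circ\Theta^{-1}$.

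\emph{Normalisation.} Put $\lambda:=|\Theta'(0)|$. I claim $\lambda\simeq r$. The bound $\lambda\lesssim r$ follows from $\diam f^n(B)\ge\delta_0$: since $\Phi$ scales by $\varepsilon_0$ and $\beta$ is a contraction for the hyperbolic metric, the connected set $\Theta^{-1}(B)$ (which contains $0$) has hyperbolic diameter $\gtrsim\delta_0/\varepsilon_0$, hence meets $\{|z|=c_0\}$ for a universal $c_0\in(0,1)$; Koebe's growth theorem then produces a point of $B=\Theta(\Theta^{-1}(B))$ at distance $\gtrsim\lambda$ from $p$, so $\lambda\lesssim r$. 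The reverse bound $r\lesssim\lambda$ is equivalent to the assertion that $\Theta^{-1}(B)$ lies in a fixed sub-disk $B(0,\rho_0)\subset\D$, and this is the one genuinely delicate point: it is forced by the degree bound (the pull-back of the half-ball $B(f^n(p),\varepsilon_0/2)$ sits ``uniformly deep'' inside $W$), and is where the proof has to do real work. Granting it, $\Theta$ is bi-Lipschitz on $B(0,\rho_0)$ with both constants comparable to $r$.

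\emph{Conclusion.} For (c): if $u,v\in B$ and $z=\Theta^{-1}(u)$, $w=\Theta^{-1}(v)\in B(0,\rho_0)$, then
\begin{align*}
|f^n(u)-f^n(v)|=\varepsilon_0\,|\beta(z)-\beta(w)|\lesssim\varepsilon_0\,|z-w|\lesssim\frac{\varepsilon_0}{r}\,|u-v|,
\end{align*}
using that $\beta$ is uniformly Lipschitz on $B(0,\rho_0)$ and that $\Theta^{-1}$ is Lipschitz with constant $\simeq r^{-1}$ there. For (a) and (b) one argues by compactness. Blaschke products of degree $\le D_0$ form a compact family for local uniform convergence on $\D$; adding the constraints $\beta(0)=0$ and $\sup_{\overline{B(0,\rho_0)}}|\beta|\gtrsim\delta_0/\varepsilon_0$ (which hold here, by the normalisation) leaves a compact family of \emph{non-constant} Blaschke products. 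Since near a critical point such a $\beta$ behaves like $z\mapsto z^k$ with $k\le D_0$, while away from critical points it is locally bi-Lipschitz, a compactness argument over this family yields uniform constants $K,\gamma>0$ (with $\gamma\simeq 1/D_0$) such that $\diam E\le K(\diam\beta(E))^\gamma$ for every connected $E\subset B(0,\rho_0)$; transporting this estimate across $\Theta$ and $\Phi$ gives (a). Likewise there is a uniform $\eta>0$ such that, for every such $\beta$, the component of $\beta^{-1}(B(0,\eta))$ through $0$ is mapped by $\beta$ onto $B(0,\eta)$ and is small enough for its $\Theta$-image to lie in $B(p,r/2)$; this gives (b) with $r_1\simeq\varepsilon_0\eta$.

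\emph{Main obstacle.} Everything hinges on the quantitative control of the pull-back domain $W$ in the normalisation step, i.e. on showing $r\simeq|\Theta'(0)|$ with constants independent of $n$ and of $B$; this is exactly where the uniform degree bound $D_0$ (hence non-recurrence of the critical points) and the size normalisation $\diam f^n(B)\ge\delta_0$ are indispensable. The remaining ingredients are soft: Koebe's and the Schwarz--Pick estimates together with a normal-families argument for the uniformity of the constants. (This is the same circle of ideas as in \cite[Lemma 4.1]{BLM} for sub-hyperbolic maps.)
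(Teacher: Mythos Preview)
Your framework is exactly the paper's: uniformise $W$ and $B(f^n(p),\varepsilon_0)$ to $\D$, reduce $f^n$ to a Blaschke product $\beta$ of degree $\le D_0$ fixing $0$, and transfer everything through Koebe. Part (c) is also argued in the paper precisely as you do it. But two points deserve comment.

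\textbf{The normalisation gap.} You explicitly ``grant'' the inclusion $\Theta^{-1}(B)\subset B(0,\rho_0)$ and call it the main obstacle; that is not a proof, and this step is where the entire lemma lives. The paper settles it with a modulus argument: since $f^n(B)\subset B(f^n(p),\varepsilon_0/2)$, one has $\md\bigl(B(f^n(p),\varepsilon_0)\setminus\overline{f^n(B)}\bigr)\ge\frac{1}{2\pi}\log 2$; a bounded-degree pull-back inequality for moduli of annuli (if $g\colon U\to U'$ is proper of degree $\le D$ then $\md(U'\setminus\overline{V'})\le D\,\md(U\setminus\overline{V})$ for any component $V$ of $g^{-1}(V')$) transfers this to $\md(W\setminus\overline B)\ge\frac{1}{2\pi D_0}\log 2$, hence $\md(\D\setminus\overline{\Theta^{-1}(B)})\ge\frac{1}{2\pi D_0}\log 2$ by conformal invariance. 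Gr\"otzsch's extremal-length theorem then bounds the farthest point of $\Theta^{-1}(B)$ by a uniform $\rho_0<1$. Your Schwarz--Pick argument for $\lambda\lesssim r$ is fine (the paper uses the reverse modulus inequality and Gr\"otzsch again for that direction), but you need the modulus pull-back lemma for $r\lesssim\lambda$; there is no softer way around it.

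\textbf{Compactness versus modulus for (a) and (b).} Your normal-families approach to (a) and (b) is different from the paper and, as written, is incomplete: the family of degree-$\le D_0$ Blaschke products fixing $0$ is not closed under locally uniform limits (zeros may escape to $\partial\D$, dropping the degree), and you would still need to show that the map ``$\beta\mapsto$ best H\"older constant on $B(0,\rho_0)$'' is upper semicontinuous along such degenerations to extract a uniform bound. The paper avoids this by proving the estimates directly with moduli: for (a), if $0\in E$ and $W$ is the component of $\beta^{-1}(B(0,|\zeta|))$ containing $E$ (with $|\zeta|\simeq\diam\beta(E)$), then $\mu(|\alpha|)\ge\md(\D\setminus\overline W)\ge\frac{1}{2\pi D_0}\log(1/|\zeta|)$, and the asymptotics $\mu(r)\sim\frac{1}{2\pi}\log(1/r)$ give $\diam E\lesssim(\diam\beta(E))^{1/D_0}$; for (b) the same modulus inequality forces the component of $\beta^{-1}(B(0,r_1))$ through $0$ to reach a fixed radius when $r_1$ is chosen small. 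This yields the explicit exponent $\gamma=1/D_0$ and uniform constants without any compactness.
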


This lemma asserts that any ball of small radius centered at the Julia set can be blown up to a certain size, using some iterate $f^n$, with good distortion estimates. For \textit{hyperbolic} rational maps (i.e., no parabolic cycles and no critical points on the Julia set) the map $f^n$ would actually be bi-Lipschitz and part (c) of the above lemma would be true with $\simeq$ instead of $\leq$. However, in the semi-hyperbolic case, the presence of critical points on the Julia set prevents such good estimates, but part (a) of the lemma restores some of them. 
 
In order to prepare for the proof we need some distortion lemmas. Using Koebe's distortion theorem (e.g., see \cite[Theorem 1.3]{Po}) one can derive the following lemma.
\begin{lemma}\label{Lemma - Koebe}
Let $g: \D \to \C$ be a univalent map and let $0<\rho<1$. Then there exists a constant $C_\rho>0$ that depends only on $\rho$, such that  
\begin{align*}
\frac{1}{C_\rho}  |g'(0)| |u-v| \leq |g(u)-g(v)| \leq  C_\rho  |g'(0)| |u-v| 
\end{align*} 
for all $u,v \in B(0,\rho)$. 
\end{lemma}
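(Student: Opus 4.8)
The plan is to reduce this two-point estimate to the classical \emph{one-point} Koebe bounds (the distortion and growth theorems) by a normalization followed by a M\"obius pre-composition that moves one of the two points to the origin. First I would normalize: replacing $g$ by $(g-g(0))/g'(0)$ keeps $g$ univalent on $\D$, rescales $|g'(0)|$ to $1$, and multiplies $|g(u)-g(v)|$ by $|g'(0)|^{-1}$. Hence it suffices to show that under the normalization $g(0)=0$, $g'(0)=1$ one has $\frac{1}{C_\rho}|u-v|\le |g(u)-g(v)|\le C_\rho|u-v|$ for all $u,v\in B(0,\rho)$, with $C_\rho$ depending only on $\rho$.

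Next I would recall the two standard consequences of Koebe's theorem for such $g$: the distortion theorem gives $\frac{1-|z|}{(1+|z|)^3}\le |g'(z)|\le \frac{1+|z|}{(1-|z|)^3}$, so $|g'(z)|\simeq 1$ uniformly on $B(0,\rho)$; and the growth theorem gives $\frac{|z|}{(1+|z|)^2}\le |g(z)|\le \frac{|z|}{(1-|z|)^2}$, so $|g(z)|\simeq |z|$ on $B(0,\rho)$ (constants depending only on $\rho$). The latter already settles the case $v=0$. For general $u,v$ I would move $u$ to the origin: with $\psi_u(z)=\frac{z+u}{1+\bar u z}$ the disk automorphism sending $0$ to $u$, the function
\[
G(z)=\frac{g(\psi_u(z))-g(u)}{g'(u)\,(1-|u|^2)}
\]
is again univalent on $\D$ with $G(0)=0$ and $G'(0)=1$, and for $w=\psi_u^{-1}(v)=\frac{v-u}{1-\bar u v}$ one has the identity $G(w)=\dfrac{g(v)-g(u)}{g'(u)(1-|u|^2)}$.

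Then I would control $|w|$. Since $\psi_u$ is an isometry of the hyperbolic metric of $\D$ and $u,v\in B(0,\rho)$, the triangle inequality gives $d_{\D}(0,w)=d_{\D}(u,v)\le d_{\D}(0,u)+d_{\D}(0,v)\le 2\tanh^{-1}\rho$, whence $|w|\le \rho':=\tanh(2\tanh^{-1}\rho)=\frac{2\rho}{1+\rho^2}<1$, a bound depending only on $\rho$; moreover $|1-\bar u v|\in[1-\rho^2,\,1+\rho^2]$, so $|w|\simeq |u-v|$ with constants depending only on $\rho$. Applying the growth theorem to $G$ at the point $w$ (legitimate because $|w|\le\rho'<1$) yields $|G(w)|\simeq |w|$; applying the distortion theorem gives $|g'(u)|\simeq 1$, while $1-|u|^2\in[1-\rho^2,1]$. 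Feeding these into the displayed identity gives $|g(v)-g(u)|\simeq |w|\simeq |u-v|$, with all implicit constants depending only on $\rho$, which is the claim after undoing the normalization.

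The only real subtlety is the \emph{lower} bound. One cannot simply write $g(u)-g(v)=\int_{[v,u]}g'$ and bound below by $\inf_{B(0,\rho)}|g'|\cdot|u-v|$, because Koebe's distortion theorem only controls $\arg g'$ up to roughly $\log\frac{1+\rho}{1-\rho}$, which exceeds $\pi/2$ when $\rho$ is close to $1$, so genuine cancellation in the integral is possible (as one sees already for the Koebe function). The M\"obius renormalization above circumvents this by replacing the line-integral estimate with the univalent growth theorem for $G$, which has no such defect; everything else in the argument is a routine chaining of comparisons.
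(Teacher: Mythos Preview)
Your argument is correct: the normalization, the M\"obius pre-composition sending $u$ to $0$, the hyperbolic-metric bound $|w|\le \frac{2\rho}{1+\rho^2}<1$, and the application of the growth and distortion theorems to $G$ all go through, and your remark about why the naive line-integral fails for the lower bound is on point. The paper itself does not give a proof of this lemma beyond the sentence ``Using Koebe's distortion theorem \dots\ one can derive the following lemma,'' so your write-up is exactly the kind of derivation the paper is gesturing at; there is nothing to compare against.
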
 

We will be using the notation $|g(u)-g(v)| \simeq_\rho  |g'(0)| |u-v|$. We also need the next lemma.

\begin{lemma}\label{Lemma - Simply connected components} Let $g:\widehat \C\to \widehat \C$ be a semi-hyperbolic rational map with $\J(g)\neq \widehat \C$ and assume that $\J(g)$ is connected. Then there exists $\varepsilon>0$ such that for all $x\in \J(g)$, each component of $g^{-m}(B(x,\varepsilon))$ is simply connected, for all $m\in \N$.
\end{lemma}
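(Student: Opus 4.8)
\emph{Proof proposal.} The plan is to combine a Riemann--Hurwitz (Euler characteristic) argument with the uniform shrinking of pullbacks for semi-hyperbolic maps, and to induct on $m$. The basic observation is the following: if $\phi\colon V\to V'$ is a proper branched covering between domains $V,V'\subsetneq\widehat{\C}$ with $V'$ simply connected, and $V$ contains at most one critical point of $\phi$, then $V$ is simply connected. Indeed, the Riemann--Hurwitz formula gives $\chi(V)=\deg(\phi)\,\chi(V')-\sum_{w\in V}(e_\phi(w)-1)=\deg(\phi)-\sum_{w\in V}(e_\phi(w)-1)$ (using $\chi(V')=1$), where $e_\phi(w)$ is the local degree of $\phi$ at $w$; since $V$ is an open connected proper subset of $\widehat{\C}$ we have $\chi(V)\le 1$, with equality exactly when $V$ is simply connected. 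If $V$ contains no critical point, then $\phi$ is an unbranched cover of the disk $V'$, so $\deg(\phi)=1$; if $V$ contains exactly one critical point $c$, of local degree $e\le\deg(\phi)$, then $\chi(V)=\deg(\phi)-(e-1)\ge 1$. Either way $\chi(V)=1$, so $V$ is simply connected.

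Next I would invoke Ma\~n\'e's theorem \cite[Theorem II]{Ma}. Part~(b), already recalled above, is the bound $D_0$ on the degree of the pullbacks of $\varepsilon_0$-balls; the companion part~(a) is the uniform shrinking of pullbacks: for every $\eta>0$ there is $\varepsilon>0$ such that every connected component of $g^{-m}(B(x,\varepsilon))$ has diameter less than $\eta$, for all $x\in\J(g)$ and all $m\ge 0$. Let $\eta>0$ be the minimal distance between two distinct critical points of $g$ (positive, since $g$ has at least two critical points and only finitely many), let $\varepsilon$ be as in the shrinking statement for this $\eta$, and shrink $\varepsilon$ further so that $2\varepsilon<\eta$ as well. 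Then every connected component of every $g^{-m}(B(x,\varepsilon))$, for $x\in\J(g)$ and $m\ge 0$, has diameter $<\eta$, and hence contains at most one critical point of $g$.

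With $\varepsilon$ fixed as above, I would prove by induction on $m$ that every component of $g^{-m}(B(x,\varepsilon))$ is simply connected. For $m=0$ this set is the round ball $B(x,\varepsilon)$, which is simply connected. Assume the statement for $m-1$, and let $W$ be a component of $g^{-m}(B(x,\varepsilon))$. Then $g(W)$ lies in a single component $W'$ of $g^{-(m-1)}(B(x,\varepsilon))$; since a component of the preimage of a connected open set under the proper map $g$ is a full component of $g^{-1}$ of that set and maps properly onto it, $W$ is a component of $g^{-1}(W')$ and $g$ maps $W$ properly onto $W'$. By the induction hypothesis $W'$ is simply connected, and by the choice of $\varepsilon$ the set $W$ contains at most one critical point of $g$; applying the basic observation to $\phi=g|_W\colon W\to W'$ shows that $W$ is simply connected. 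This completes the induction and proves the lemma.

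The main obstacle is the uniform shrinking of pullbacks used in the second step: this is precisely where semi-hyperbolicity (the absence of parabolic cycles together with the non-recurrence of the critical points lying in $\J(g)$) is genuinely used, and it has to be imported from Ma\~n\'e's work rather than deduced from the distortion estimates of the present section, since Lemma~\ref{Lemma - elevator} itself is proved using the present lemma. The remaining ingredients are routine: the properness of $g\colon W\to W'$ and the fact that $W$ is a full component of $g^{-1}(W')$; the validity of the Riemann--Hurwitz formula for proper maps between (possibly non-compact) planar Riemann surfaces; and the observation that an open connected proper subset of $\widehat{\C}$ has Euler characteristic at most $1$, with equality if and only if it is simply connected.
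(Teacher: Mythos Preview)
Your proof is correct and takes a genuinely different route from the paper's. Both arguments begin the same way, invoking Ma\~n\'e's uniform shrinking of pullbacks to make every component of $g^{-m}(B(x,\varepsilon))$ have small diameter. From there the paper argues by contradiction: taking $m_0$ minimal for which some component $W$ of $g^{-m_0}(B(x,\varepsilon))$ is not simply connected, its bounded complementary components $V_i$ lie in a thin neighborhood of $\J(g)$ and hence avoid the poles of $g$ (after conjugating so that $\infty$ is a periodic Fatou point); but a path from a point of $g(V_i)\setminus\overline{g(W)}$ to $\infty$ in $\widehat{\C}\setminus\overline{g(W)}$ lifts under $g$ to a path in $V_i$ terminating at a pole, a contradiction. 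You replace this path-lifting argument by a Riemann--Hurwitz count: choosing $\eta$ smaller than the minimal distance between distinct critical points forces each pullback component to contain at most one critical point of $g$, and then induction on $m$ together with $\chi(W)=\deg(g|_W)-(e-1)\ge 1$ gives $\chi(W)=1$. Your argument is the more standard device in complex dynamics and is arguably cleaner, since it sidesteps the somewhat delicate step in the paper's proof that $g(V_i)$ cannot lie entirely in $\overline{g(W)}$; the paper's argument, on the other hand, is more self-contained in that it uses only elementary path-lifting rather than the Euler-characteristic formula for proper maps between non-compact surfaces. Neither proof actually uses the connectedness of $\J(g)$.
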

\begin{proof}
As before, by conjugating, we may assume that $\infty$ is a periodic point in the Fatou set, and the Julia set is ``far" from the poles of $g$. By semi-hyperbolicity (see \cite[Theorem II(c)]{Ma}), for each $x\in \J(g)$ and $\eta>0$, there exists $\varepsilon>0$ such that each component of $g^{-m}(B(x,\varepsilon))$ has Euclidean diameter less than $\eta$, for all $m\in \N$. By compactness of $\J(g)$, we may take $\varepsilon>0$ to be uniform in $x$. We choose a sufficiently small $\eta$ such that the $3\eta$-neighborhood $N_{3\eta} (\J(g))$ of $\J(g)$ does not contain any poles of $g$.

We claim that each component of $g^{-m}(B(x,\varepsilon))$ is simply connected. If this was not the case, there would exist an open component $W$ of $g^{-{m_0}}(B(x,\varepsilon))$, and a non-empty family of compact components $\{V_i\}_{i\in I}$ of $ \C \setminus W$. Thus $\diam V_i \leq \diam W <\eta$ for $i\in I$. Assume that $m_0\in \N$ is the smallest such integer. Note that $W$ intersects the Julia set $\J(g)$, because $g^{m_0}(W)=B(x,\varepsilon)$ does so. Hence, we have $W\subset N_{\eta}(\J(g))$. Since $V_i,i\in I$ and $W$ share at least one common boundary point, it follows that $V_i\subset N_{2\eta}(\J(g))$, and in particular $V_i$ does not contain any poles of $g$, i.e., $\infty \notin g( V_i)$ for all $i\in I$.

By the choice of $m_0$ the set $g(W) \subset g^{-m_0+1} (B(x,\varepsilon)) $ is a simply connected set in the $\eta$-neighborhood of $\J(g)$. Note that $\bigcup_{i\in I}g(V_i)$ cannot be entirely contained in $\br{g(W)}$, otherwise $W$ would not be a component of $g^{-m_0}(B(x,\varepsilon))$. Thus, there exists some $V_i\eqqcolon V$ and a point $w_0 \in (\widehat \C \setminus \br{ g(W)})\cap g(V)$. We connect the point $w_0$ to $\infty$ with a path $\gamma \subset \widehat \C \setminus \br {g(W)}$, and then we lift $\gamma$ under $g$ to a path $\alpha\subset \widehat \C$ that connects a preimage $z_0 \in V$ of $w_0$ to a pole of $g$ (see \cite[Lemma A.16]{BM} for path-lifting under branched covers). The path $\alpha$ cannot intersect $W$, so it stays entirely in $ V$. This contradicts the fact that $ V$ contains no poles.
\end{proof}

Now we are ready to start the proof of Lemma \ref{Lemma - elevator}. Since $\diam f^n(B)<\varepsilon_0/2$, for $x=f^n(p)\in \J(f)$ we have $f^n(B)\subset B(x,\varepsilon_0/2)$, and for the component $\Omega$ of $f^{-n}(B(x,\varepsilon_0))$ that contains $B$ we have that the degree of $f^n: \Omega \to B(x,\varepsilon_0)$ is bounded by $D_0$. Lemma \ref{Lemma - Simply connected components} implies that we can refine our choice of $\varepsilon_0$ such that $\Omega$ is also simply connected. 

Let $\psi: \Omega\to \D$ be the Riemann map that maps the center $p$ of $B$ to $0$, and $\phi :B(x,\varepsilon_0) \to \D$ be the translation of $x$ to $0$, followed by a scaling by $1/\varepsilon_0$, so we obtain the following diagram:
\begin{equation}\label{Commutative diagram}
\begin{tikzcd}
\Omega \arrow{r}{f^n} \arrow[swap]{d}{\psi} & B(x,\varepsilon_0) \arrow{d}{\phi} \\
\D  \arrow{r}{} & \D
\end{tikzcd}
\end{equation}

\begin{figure}
	\begin{overpic}[width=.75\linewidth]{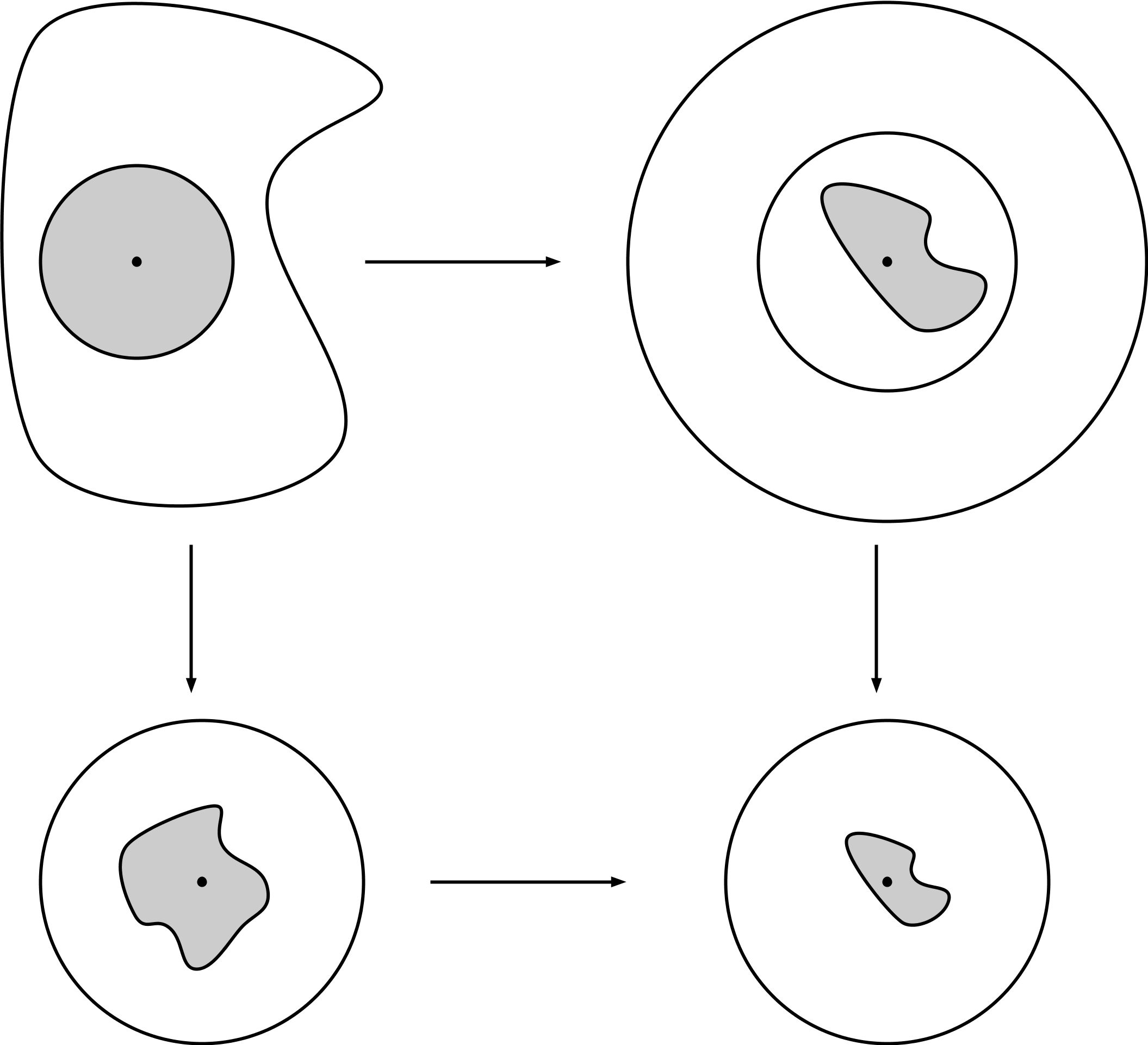}
 	\put (40,70) {$f^n$}
 	\put (13,35) {$\psi$}
 	\put (80,35) {$\phi$}
 	\put (30,45) {$\Omega$}
 	\put (20,60) {$B$}
 	\put (91,45) {$B(x,\varepsilon_0)$}
 	\put (72,59) {$f^n(B)$}
 	\put (10,70) {$p$}
 	\put (75,70) {$x$}
 	\put (18.5,5) {$\psi(B)$}
 	\put (69,5) {$\phi(f^n(B))$}
  \end{overpic}
  \captionof{figure}{The commutative diagram \eqref{Commutative diagram}.}
  \label{fig:diagram}
\end{figure}

The proof will be done in several steps. First we prove that $\psi(B)$ is contained in a ball of fixed radius smaller than $1$. Second, we show a distortion estimate for $\psi$, namely it is roughly a scaling by $1/\diam B$. In the end, we complete the proofs of (a),(b),(c), using lemmas that are generally true for proper maps.

 We claim that there exists $\rho>0$, independent of $B$ such that 
\begin{align}\label{existence of rho}
\psi(B) \subset B(0,\rho)\subset \D.
\end{align}
This will be derived from the following modulus distortion lemma. We include first some definitions.

If $\Gamma$ is a family of curves in $\C$, we define the \textit{modulus of $\Gamma$}, denoted by $\md(\Gamma)$, as follows. A function $\rho:\C \to [0,\infty]$ is called \textit{admissible} for $\md(\Gamma)$ if 
\begin{align*}
\int_\gamma \rho ds \geq 1
\end{align*}
for all curves $\gamma\in \Gamma$. Then 
\begin{align*}
\md(\Gamma)\coloneqq \inf_{\rho} \int_{\C} \rho(z)^2 dm_2(z),
\end{align*}
where $m_2$ denotes the 2-dimensional Lebesgue measure, and the infimum is taken over all admissible functions. The modulus has the \textit{monotonicity} property, namely if $\Gamma_1,\Gamma_2$ are path families and $\Gamma_1\subset \Gamma_2$, then
\begin{align*}
\md(\Gamma_1)\leq \md(\Gamma_2).
\end{align*}
Another important property of modulus is \textit{conformal invariance}: if $\Gamma$ is a curve family in an open set $U\subset \C$ and $g:U\to V$ is conformal, then 
\begin{align*}
\md(\Gamma)=\md(g(\Gamma)).
\end{align*}
We direct the reader to \cite[pp.~132--133]{LV} for more background on modulus.

If $U$ is a simply connected region, and $V$ is a connected subset of $U$ with  $\br V\subset U$, we denote by $\md(U\setminus \br V)$ the modulus of the curve family that separates $V$ from $\C\setminus \br U$.

\begin{lemma}\label{Lemma - modulus} Let $U,U' \subset \C$ be simply connected regions, and $g:U\to U'$ be a proper holomorphic map of degree $D$.
\begin{enumerate}[label=\emph{(\alph*)}]
\item If $V'$ is a Jordan region with $V'\subset \br {V'} \subset U'$, and $V$ is a component of $g^{-1}(V')$, then
\begin{align*}
\md(U'\setminus \br{V'}) \leq D \md(U\setminus \br V).
\end{align*}

\item If $V$ is a Jordan region with $V\subset \br V\subset U$, and $V'=g(V)$, then
\begin{align*}
\md( U\setminus \br V) \leq \md (U'\setminus \br{V'} ).
\end{align*}

\end{enumerate}
\end{lemma}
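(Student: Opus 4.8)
The lemma packages two distortion estimates for moduli of ring-type domains under the branched covering $g$, and I would prove both directly from the definition of modulus by admissible metrics. Two properties of a proper holomorphic map of degree $D$ do the work: the change-of-variables formula $\int_{U'}\bigl(\sum_{z\in g^{-1}(w)}\phi(z)\bigr)\,dm_2(w)=\int_U\phi(z)\,|g'(z)|^2\,dm_2(z)$ for nonnegative $\phi$, and path lifting for branched covers. Alongside the separating family $\Gamma_S(\Omega,W)$ of closed curves in $\Omega$ separating $W$ from $\C\setminus\br\Omega$ (so $\md(\Omega\setminus\br W)=\md(\Gamma_S(\Omega,W))$ by definition), I would use the conjugate family $\Gamma_C(\Omega,W)$ of curves in $\Omega\setminus\br W$ joining $\partial W$ to $\partial\Omega$, and the classical reciprocity $\md(\Gamma_S(\Omega,W))\,\md(\Gamma_C(\Omega,W))=1$, valid whenever $\Omega\setminus\br W$ is a ring domain --- in particular when $W$ is a Jordan region compactly contained in the simply connected set $\Omega$.

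For part (a): fix $\gamma'\in\Gamma_S(U',V')$, which we may take to be a Jordan curve bounding a Jordan domain $\Delta'$ with $V'\subset\Delta'$ and $\br{\Delta'}\subset U'$. Lifting paths under $g$ shows that $g^{-1}(\gamma')$ separates $V$ from $\partial U$ inside $U$: the $g$-image of a path in $\br U$ from $V$ to $\partial U$ runs from a point of $V'\subset\Delta'$ to a point of $\partial U'\subset\C\setminus\br{\Delta'}$, hence crosses $\gamma'$. By the standard fact that a compact separating set has a connected component which still separates, some component $\sigma$ of $g^{-1}(\gamma')$ separates $V$ from $\C\setminus\br U$, so that $\int_\sigma\rho\,ds\ge1$ for any $\rho$ admissible for $\Gamma_S(U,V)$. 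Now, for such a $\rho$, set $\rho'(w)=\sum_{z\in g^{-1}(w)}\rho(z)/|g'(z)|$: then $\int_{\gamma'}\rho'\,ds=\int_{g^{-1}(\gamma')}\rho\,ds\ge\int_{\sigma}\rho\,ds\ge1$, so $\rho'$ is admissible for $\Gamma_S(U',V')$, and Cauchy--Schwarz (so $\bigl(\sum_{i=1}^{D}a_i\bigr)^2\le D\sum_i a_i^2$) together with the change-of-variables formula gives $\int_{U'}(\rho')^2\,dm_2\le D\int_{U'}\sum_{z\in g^{-1}(w)}\rho(z)^2/|g'(z)|^2\,dm_2(w)=D\int_U\rho^2\,dm_2$. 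Taking the infimum over $\rho$ yields $\md(U'\setminus\br{V'})\le D\,\md(U\setminus\br V)$.

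For part (b), the same scheme run on the separating families only gives the bound with a spurious factor $D$, since pulling an admissible metric back by $g$ multiplies its energy by $D$; the fix is to argue on the conjugate families, where this loss does not occur. Replacing $V'$ by the union of $V'$ with its bounded complementary components only shrinks $\Gamma_S(U',V')$, hence only strengthens the desired inequality, and it does not affect the lifting step below; so we may assume $U'\setminus\br{V'}$ is a ring domain. Given $\gamma'\in\Gamma_C(U',V')$, its endpoint on $\partial V'$ has a preimage on $\partial V$ --- a preimage in the interior of $V$ would land in $V'=g(V)$, not on $\partial V'$ --- and lifting $\gamma'$ from that preimage produces $\widetilde{\gamma'}$ with $g\circ\widetilde{\gamma'}=\gamma'$ joining $\partial V$ to $\partial U$ inside $U\setminus V$, i.e. $\widetilde{\gamma'}\in\Gamma_C(U,V)$. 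For $\rho$ admissible for $\Gamma_C(U,V)$, set $\rho'(w)=\max_{z\in g^{-1}(w)}\rho(z)/|g'(z)|$: along the lift $\int_{\gamma'}\rho'\,ds\ge\int_{\widetilde{\gamma'}}\rho\,ds\ge1$, so $\rho'$ is admissible for $\Gamma_C(U',V')$, and since a maximum of nonnegative numbers is at most their sum, the change-of-variables formula gives $\int_{U'}(\rho')^2\,dm_2\le\int_U\rho^2\,dm_2$, with no factor $D$. Hence $\md(\Gamma_C(U',V'))\le\md(\Gamma_C(U,V))$, and reciprocity on both sides closes the loop: $\md(U\setminus\br V)=\md(\Gamma_S(U,V))=1/\md(\Gamma_C(U,V))\le1/\md(\Gamma_C(U',V'))=\md(\Gamma_S(U',V'))=\md(U'\setminus\br{V'})$.

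The metric estimates above are routine; the real content is in the topological inputs used as black boxes, and I expect those to be the main obstacle. In (a) one must confirm that $g^{-1}(\gamma')$ genuinely separates $V$ from $\C\setminus\br U$ and isolate a separating connected component that is an honest competitor for $\md(\Gamma_S(U,V))$; it helps to first pick $\gamma'$ avoiding the finitely many critical values, so $g^{-1}(\gamma')$ is a disjoint union of Jordan curves, and then treat the remaining $\gamma'$ by approximation. In (b) the delicate points are the path lifting of $\gamma'$ and the verification that the lift joins $\partial V$ to $\partial U$ inside $U\setminus V$, together with the reduction that legitimizes reciprocity on the $U'$-side (and, for one who prefers to avoid that reduction, the inequality $\md(\Gamma_S)\,\md(\Gamma_C)\ge1$ for the relevant families). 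These are standard once made precise, but that is where the work lies.
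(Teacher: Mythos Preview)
Your argument is correct, and part (b) follows essentially the same route as the paper: work with the connecting families, lift curves in $\Gamma_C(U',V')$ to curves in $\Gamma_C(U,V)$, deduce $\md(\Gamma_C(U',V'))\le\md(\Gamma_C(U,V))$, and finish by reciprocity. The paper cites this last modulus inequality as Poletski\u{\i}'s inequality for $1$-quasiregular maps, whereas you unpack it by hand via the pushforward $\rho'(w)=\max_{z\in g^{-1}(w)}\rho(z)/|g'(z)|$; the content is the same. (Your worry that the lift might stray into $\br V$ is unfounded: continuity gives $g(\partial V)\subset\br{V'}\subset\br{V''}$, so a lift of a curve in $U'\setminus\br{V''}$ cannot meet $\br V$.)

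For part (a) the two proofs are genuinely different. The paper conformally straightens $U'\setminus\br{V'}$ to a round annulus, slices it into at most $D$ concentric sub-annuli whose interiors avoid the critical values, and on each slice uses that $g$ restricts to an unbranched cover of degree $d_i\le D$, so $\md A_i=\md A_i'/d_i\ge\md A_i'/D$; summing and invoking monotonicity gives the bound. Your argument bypasses the decomposition entirely: you push forward an admissible metric via $\rho'(w)=\sum_{z\in g^{-1}(w)}\rho(z)/|g'(z)|$, verify admissibility by finding a separating component of $g^{-1}(\gamma')$, and control the energy by Cauchy--Schwarz $(\sum_1^D a_i)^2\le D\sum a_i^2$ plus change of variables. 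Your route is shorter and avoids the bookkeeping of nested annuli; the paper's route makes the role of the covering degree on each unbranched piece more transparent and sidesteps the topological lemma about separating components. Both are standard and equally rigorous once the points you flag (avoiding critical values, extracting a separating Jordan component) are made precise.
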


A particular case of this lemma is \cite[Lemma 5.5]{Mc2}, but we include a proof of the general statement since we were not able to find it in the literature.
\begin{proof}
Using the conformal invariance of modulus we may assume that $U$ and $U'$ are bounded Jordan regions.

We first show $\textrm{(a)}$. Using a conformal map, we map the annulus $U'\setminus \br{V'}$ to the circular annulus $\D \setminus \br {B}(0,r)$, and by composing with $g$, we assume that we have a proper holomorphic map $g: U\setminus \br V \to \D \setminus \br {B}(0,r)$, of degree at most $D$. We divide the annulus $\D \setminus \br {B}(0,r)$ into nested circular annuli centered at the origin $A_1',\dots,A_k',k\leq D$ such that each $A_i'$ does not contain any critical value of $g$ in its interior. Note that 
\begin{align*}
\frac{1}{2\pi} \log (1/r)=\md (\D \setminus \br {B}(0,r))= \sum_{i=1}^k \md (A_i'),
\end{align*}   
where we denote by $\md(A_i')$ the modulus of curves that separate the complementary components of the annulus $A_i'$. We fix $\varepsilon>0$. By making the annuli $A_i'$ a bit thinner, we can achieve that $\partial A_i'$ does not contain any critical value of $g$, and
\begin{align}\label{Lemma - Proper Modulus - epsilon}
\sum_{i=1}^k \md A_i' \geq \md (\D \setminus \br {B}(0,r))-\varepsilon.
\end{align}
Let $A_i$ be a preimage of $A_i'$, so that $A_1,\dots,A_k$ are nested annuli separating $V$ from $\C \setminus \br U$, and avoiding the critical points of $g$. Note that $g:A_i\to A_i'$ is a covering map of degree $d_i \leq D$, thus $\md A_i = \md A_i' /d_i \geq  \md A_i' /D$. This implies that 
\begin{align}\label{Lemma - Proper Modulus - sum}
\md (U\setminus \br V) \geq  \sum_{i=1}^k {\md (A_i)} \geq \sum_{i=1}^k {\md (A_i')} /D.
\end{align}
To see the first inequality, note that an admissible function $\rho$ for $\md (U\setminus \br V)$ yields admissible functions $\rho|_{A_i}$ for $\md (A_i)$. Combining \eqref{Lemma - Proper Modulus - sum} and \eqref{Lemma - Proper Modulus - epsilon} we obtain
\begin{align*}
\md(U\setminus \br V)   \geq   \frac{1}{D} \left(\md (\D \setminus \br {B}(0,r))-\varepsilon\right).
\end{align*}
Letting $\varepsilon \to 0$ one concludes the proof.

The inequality in $\textrm{(b)}$ follows from Poletski\u{\i}'s inequality \cite[Chapter II, Section 8]{Ri}. Since holomorphic maps are \textit{$1$-quasiregular} (see \cite[Chapter I]{Ri} for definition and background), we have 
\begin{align}\label{Lemma -  Poletski}
\md (g(\Gamma)) \leq \md (\Gamma)
\end{align}
for all path families $\Gamma$ in $U$. First we shrink the regions $U$ and $U'$ as follows. Consider a Jordan curve $\gamma_1'$ very close to $\partial U'$ such that $\gamma_1'$ encloses a region $U_1'$ that contains $V'$ and all critical values of $g$. Then $U_1\coloneqq g^{-1}(U_1')$ is a Jordan region that contains $V$ and all critical points of $g$. 

Let $\Gamma $ be the family of paths in $\br {U_1}\setminus V$ that connect $\partial V$ to $ \partial U_1$ and avoid preimages of critical values of $g$, which are finitely many. Also, note that $g(\Gamma) \supset \Gamma'$, where $\Gamma'$ is the family of paths in $\br {U'_1} \setminus V'$ that connect $\partial V'$ to $\partial U_1'$, and avoid the critical values of $g$. To see this, observe that any such path $\gamma '$ has a lift $\gamma\subset \br{U_1}\setminus V$ that starts at $\partial V$ and ends at $\partial U_1$. 

Using monotonicity of modulus and \eqref{Lemma -  Poletski} we have $\md (\Gamma')\leq \md (g(\Gamma)) \leq \md (\Gamma)$. If $\tilde \Gamma$ is the family of all paths in $\br {U_1}\setminus V$ that connect $\partial V$ to $ \partial U_1$, then $\tilde \Gamma$ differs from $\Gamma$ by a family of zero modulus. The same is true for the corresponding family $\tilde \Gamma'$ in $\br {U_1'} \setminus V'$. Thus, we have $ \md( \tilde \Gamma' ) \leq \md(\tilde \Gamma)$. By reciprocality of the modulus and monotonicity, it follows that 
$$\md( U_1 \setminus \br V ) \leq \md( U_1' \setminus \br {V'}) \leq \md(U'\setminus \br{V}). $$

Finally, observe that the path family separating  $V$ from  $\C \setminus \br U$ can be written as an increasing union of families separating $V$ from sets of the form $\C \setminus \br {U_1}$, where $U_1$ gets closer and closer to $U$. Writing $\md (U\setminus \br V)$ as a limit of moduli of such families, one obtains the desired inequality.
\end{proof}

We now return to the proof of \eqref{existence of rho}.

Applying Lemma \ref{Lemma - modulus}$\textrm{(a)}$ to $g=f^n: \Omega \to B(x,\varepsilon_0)$, and using the fact that $f^n(B) \subset B(x, \varepsilon_0/2)$ along with monotonicity of modulus we obtain 
\begin{align*}
\frac{1}{2\pi} \log 2 = \md (B(x,\varepsilon_0)\setminus \br B(x,\varepsilon_0/2)) \leq D_0 \md( \Omega \setminus \br B).
\end{align*}
Since modulus is invariant under conformal maps, we have 
\begin{align} \label{Modulus estimate psi(B)}
\frac{1}{2\pi } \log2 \leq D_0 \md( \D \setminus \br {\psi (B)}).
\end{align}
If $ \zeta \in \br  {\psi(B)}$ is such that $|\zeta|= \sup \{|z| : z\in \br {\psi(B)}\}$ then by Gr\"otzsch's modulus theorem (see \cite[p.54]{LV}) we have $\md(\D \setminus \br {\psi(B)}) \leq \mu(|\zeta|)$, where $\mu : (0,1)\to (0,\infty)$ is a strictly decreasing bijection. Thus, by \eqref{Modulus estimate psi(B)} $\mu(|\zeta|)$ is uniformly bounded below, and by monotonicity there exists $\rho\in(0,1)$ such that $|\zeta| \leq \rho$. Hence, $\psi(B) \subset B(0,\rho)$, which proves \eqref{existence of rho}. 

Now, the version of Koebe's theorem in Lemma \ref{Lemma - Koebe} yields 
\begin{align}\label{Koebe for psi}
|u-v| \simeq_\rho |(\psi^{-1})'(0)| |\psi(u)-\psi(v)|
\end{align}
for all $u,v\in B$. We claim that $|(\psi^{-1})'(0)| \simeq \diam B$, so \eqref{Koebe for psi} can be rewritten as
\begin{align}\label{Koebe for psi-diamB}
|u-v| \simeq  \diam B |\psi(u)-\psi(v)|
\end{align}
for all $u,v\in B$. Using \eqref{Koebe for psi}, in order to prove our claim, it suffices to show that $\diam \psi(B) \simeq 1$. Note that by \eqref{diamf^n(B) bounds} we have $\diam f^n(B) \simeq 1$, and since $\phi$ is a scaling by a fixed factor, we have $\diam \phi (f^n(B)) \simeq 1$. Using Lemma \ref{Lemma - modulus}$\textrm{(b)}$ for the diagram \eqref{Commutative diagram}, and Gr\"otzsch's modulus theorem we obtain
\begin{align*}
\md( \D\setminus \br {\psi(B)}) \leq  \md( \D \setminus \br {\phi(f^n(B))})\leq \mu(|\zeta|),
\end{align*}
where $\zeta \in \br {\phi(f^n(B))}$ is the furthest point from the origin. Since $0,\zeta\in \br{\phi (f^n(B))}$, we have $|\zeta|  \simeq \diam \phi (f^n(B)) \simeq 1$. Monotonicity of $\mu$ implies that $\mu( |\zeta|) \lesssim 1$, thus
\begin{align*}
\md( \D\setminus \br {\psi(B)}) \lesssim 1.
\end{align*} 
On the other hand, if $\alpha \in \br {\psi(B)}$ is the furthest from the origin, we have $\diam \psi(B) \simeq |\alpha|$, and $B(0,|\alpha|) \supset \psi(B)$, thus by monotonicity of modulus
\begin{align*}
\md(\D \setminus \br B(0,|\alpha|) ) \leq \md (\D \setminus \br { \psi(B)}) \lesssim 1.
\end{align*}
This shows that $ \log (1/|\alpha|) \lesssim 1$, so $|\alpha|\gtrsim 1$, and this implies that $\diam \psi(B)\simeq 1$, as claimed.

Before proving part $\textrm{(a)}$ of Lemma \ref{Lemma - elevator}, we include a general lemma for proper self-maps of the disk.

\begin{lemma}\label{Lemma - Blaschke - diameters}
Let $P:\D \to \D$ be a proper holomorphic map of degree $D$, with $P(0)=0$, and fix $\rho\in (0,1)$. There exists a constant $C>0$ depending only on $D,\rho$ such that for each connected set $A\subset B(0,\rho)$ one has
\begin{align*}
\diam A \leq C (\diam P(A))^{1/D}.
\end{align*}
\end{lemma}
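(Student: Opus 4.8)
The plan is to combine the explicit finite Blaschke-product representation of $P$ with a pigeonhole argument exploiting the connectedness of $A$; the hypothesis $P(0)=0$ will enter only through Schwarz's lemma, which keeps the image $P(A)$ inside a fixed compact subset of $\D$ and thereby yields uniformity of the constant. First I would reduce to the case where $P$ has a zero inside $A$. By Schwarz's lemma $|P(z)|\le|z|$, so $P(\overline B(0,\rho))\subset\overline B(0,\rho)$; in particular $\overline A$ and $P(\overline A)$ both lie in the compact set $\overline B(0,\rho)\subset\D$. Choose $z_1,z_2\in\overline A$ with $|z_1-z_2|=\diam A=:t$; we may assume $t>0$. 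Let $\sigma(w)=\frac{w-w_0}{1-\overline{w_0}\,w}$ with $w_0:=P(z_1)$, the disk automorphism sending $w_0$ to $0$. From $|w_0|\le\rho$, the M\"obius identity $|\sigma(a)-\sigma(b)|=|\sigma'(a)|^{1/2}|\sigma'(b)|^{1/2}|a-b|$, and the bounds $\tfrac{1-\rho^2}{4}\le|\sigma'(w)|=\tfrac{1-|w_0|^2}{|1-\overline{w_0}w|^2}\le(1-\rho^2)^{-2}$ for $w\in\overline B(0,\rho)$, the map $\sigma$ is bi-Lipschitz on $\overline B(0,\rho)$ with constants depending only on $\rho$. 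Hence $\tilde P:=\sigma\circ P$ is a proper holomorphic self-map of $\D$ of degree $D$, so a finite Blaschke product of degree $D$; it satisfies $\tilde P(z_1)=0$, and $\diam\tilde P(\overline A)\le(1-\rho^2)^{-2}\diam P(A)$. It then suffices to prove $\diam\tilde P(\overline A)\ge(t/(8D))^{D}$.

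The heart of the matter is a pigeonhole over radial slices centered at $z_1$. Write $\tilde P(z)=e^{i\beta}\prod_{j=1}^{D}\frac{z-c_j}{1-\overline{c_j}z}$ with $c_j\in\D$. Since $\overline A$ is connected and contains points at distances $0$ and $t$ from $z_1$, the continuous map $z\mapsto|z-z_1|$ carries $\overline A$ onto an interval containing $[0,t]$, so $\overline A\cap\partial B(z_1,s)\neq\emptyset$ for every $s\in[0,t]$. Put $\delta:=t/(4D)$; the ``bad'' set $S:=\bigcup_{j=1}^{D}\bigl(|z_1-c_j|-\delta,\,|z_1-c_j|+\delta\bigr)$ has $|S\cap[0,t]|\le2D\delta=t/2<t$, so I can choose $s^{*}\in[0,t]\setminus S$ and then $z^{*}\in\overline A\cap\partial B(z_1,s^{*})$. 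For each $j$ the triangle inequality gives $|z^{*}-c_j|\ge\bigl|\,|z^{*}-z_1|-|z_1-c_j|\,\bigr|=\bigl|s^{*}-|z_1-c_j|\bigr|\ge\delta$, while $|1-\overline{c_j}z^{*}|\le1+|c_j||z^{*}|<2$ because $z^{*}\in\overline B(0,\rho)$. Hence, using $\tilde P(z_1)=0$,
\[
\diam\tilde P(\overline A)\ \ge\ |\tilde P(z^{*})-\tilde P(z_1)|=|\tilde P(z^{*})|=\prod_{j=1}^{D}\frac{|z^{*}-c_j|}{|1-\overline{c_j}z^{*}|}\ \ge\ \Bigl(\frac{\delta}{2}\Bigr)^{D}=\Bigl(\frac{t}{8D}\Bigr)^{D}.
\]
Combining with the reduction step yields $\diam A=t\le8D\,(\diam\tilde P(\overline A))^{1/D}\le8D(1-\rho^2)^{-2/D}(\diam P(A))^{1/D}$, which is the assertion with $C=8D(1-\rho^2)^{-2/D}$, depending only on $D$ and $\rho$.

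I expect the point requiring the most care to be the recognition that one cannot simply compare $P$ at two diameter-realizing points of $A$: for $P(z)=z^{D}$, a Jordan arc joining $0$ to a nontrivial $D$-th root of unity has its endpoints sent to the same value, so the conclusion genuinely uses the connectedness of $A$. The substantive step is therefore the slice pigeonhole, which produces a single point of $A$ staying $\gtrsim t$ away from all $D$ zeros of $\tilde P$ simultaneously; the remaining delicate issue is uniformity of $C$ over the noncompact family of degree-$D$ Blaschke products fixing the origin (the zeros may approach $\partial\D$, degenerating the map), and this is exactly what the Schwarz-lemma normalization takes care of.
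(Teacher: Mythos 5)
Your proof is correct, but it takes a genuinely different route from the paper's. The paper proves this lemma with the conformal-modulus machinery it has already set up: it lets $\zeta$ be the farthest point of $\overline{P(A)}$, lets $W$ be the component of $P^{-1}(B(0,|\zeta|))$ containing $A$, and compares $\md(\D\setminus\overline W)$ to $\md(\D\setminus\overline B(0,|\zeta|))$ using Lemma~\ref{Lemma - modulus}(a) together with Gr\"otzsch's modulus theorem and its asymptotics (Lemma~\ref{Lemma - Grotzsch asymptotics}); the reduction from general $A\subset B(0,\rho)$ to $0\in A$ is done, as in your argument, by pre- and post-composing with disk automorphisms. You instead bypass moduli altogether and argue directly from the finite Blaschke-product representation of $\tilde P=\sigma\circ P$: the slice pigeonhole over $s\in[0,t]$ (using connectedness of $A$ to guarantee a point of $\overline A$ on every sphere $\partial B(z_1,s)$) produces a single point $z^{*}\in\overline A$ at distance $\ge t/(4D)$ from all $D$ zeros of $\tilde P$, whence $|\tilde P(z^{*})|\ge(t/(8D))^D$; the Schwarz normalization $P(0)=0$ enters exactly as you say, to keep $P(\overline A)\subset\overline B(0,\rho)$ so that the automorphism $\sigma$ is $\rho$-uniformly bi-Lipschitz there. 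Both proofs are sound; yours is more elementary and yields an explicit constant $C=8D(1-\rho^2)^{-2/D}$, while the paper's fits naturally with the modulus lemma it reuses elsewhere in Section~\ref{Section - Conformal elevator} (e.g.\ for Lemma~\ref{Lemma - Blaschke} and the estimate \eqref{existence of rho}), so that a single set of tools carries the whole Conformal Elevator argument.
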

\begin{proof}
Let $ A$ be a connected subset of $\D$, and assume first that $0\in A$. Define $\zeta$ to be the furthest point of $\br {P(A)}$, so $P(A) \subset B(0,|\zeta|)$, and $\diam P(A) \simeq |\zeta|$, since $0\in P(A)$. Let $W$ be the component of $P^{-1}(B(0,|\zeta|))$ that contains $A$, and consider $\alpha$ to be the furthest point of $\br W$, so $W\subset B(0,|\alpha|)$. Using Gr\"otzsch's modulus theorem and Lemma \ref{Lemma - modulus}$\textrm{(a)}$ we have
\begin{align}\label{alpha-zeta estimates}
\mu( |\alpha|)&\geq \md (\D \setminus \br W) \geq  \frac{1}{D} \md (\D\setminus \br B(0,|\zeta|) )= \frac{1}{2\pi D} \log \frac{1}{|\zeta|}.
\end{align}
The following lemma gives us the asymptotic behavior of $\mu$ as $r\to 0$ (see \cite[pp. 72--76]{Ah}).

\begin{lemma} \label{Lemma - Grotzsch asymptotics}
There exists $r_0\in (0,1)$ such that for $0<r\leq r_0$ we have
\begin{align*}
\frac{1}{2\pi} \log\frac{2}{r}\leq  \mu(r)\leq \frac{1}{2\pi} \log \frac{8}{r}.
\end{align*}
\end{lemma}

Using this lemma, if $|\alpha|\leq r_0$, then by \eqref{alpha-zeta estimates} one has $|\alpha|\leq 8 |\zeta|^{1/D}$. If $|\alpha|>r_0$, then using \eqref{alpha-zeta estimates} and the monotonicity of $\mu$ one obtains a uniform lower bound for $|\zeta|$, thus $|\alpha|\leq 1 \lesssim |\zeta|^{1/D}$. In all cases
\begin{align}\label{alpha-zeta estimates 2}
\diam A \leq \diam W \lesssim |\alpha| \lesssim |\zeta|^{1/D} \simeq (\diam P(A))^{1/D}.
\end{align}

In the above we only assumed that $0\in A$. Now we drop this assumption, and consider a connected set $A\subset B(0,\rho)$. By Schwarz's lemma we have $|P(z)|\leq |z|$, so $P(A)\subset P( B(0,\rho))\subset \br{B}(0,\rho)$. Let $z_0\in A$, and $w_0=P(z_0)\in P(A)$. Consider  M\"obius transformations $\phi$ and $\psi$ of the disk that move $z_0$ and $w_0$ to $0$ respectively. Applying the previous case to $\psi \circ P \circ \phi^{-1}$ and the connected set $\phi(A)$ one has
\begin{align*}
\diam \phi(A) \lesssim \diam\psi(P(A))^{1/D}.
\end{align*}
However, since $A,P(A) \subset \br{B}(0,\rho)$, it follows (e.g. by direct computation using the formulas of the M\"obius transformations $\phi,\psi$) that $\diam \phi(A) \simeq \diam A$ and $\diam \psi(P(A)) \simeq \diam P(A)$ with constants depending only on $\rho$.
\end{proof}

In our case, let $P= \phi \circ f^n \circ \psi^{-1} :\D \to \D$, which is a proper map of degree bounded by $D_0$, that fixes $0$. Now, let $A\subset B$ be a connected set. Using \eqref{Koebe for psi-diamB}, and Lemma \ref{Lemma - Blaschke - diameters} applied to $\psi(A)\subset B(0,\rho)$, one has
\begin{align*}
\diam A &\simeq \diam B \cdot \diam \psi (A) \\
&\lesssim \diam B \cdot (\diam P(\psi(A)))^{1/D_0}\\
&=  \diam B \cdot (\diam \phi(f^n(A)))^{1/D_0}\\
&\simeq \diam B\cdot (\diam f^n(A))^{1/D_0},
\end{align*}
where in the end we used the fact that $\phi$ is a scaling by a fixed factor.

For the proof of part $\textrm{(b)}$, we will need again a lemma for proper maps of the disk.

\begin{lemma}\label{Lemma - Blaschke}
Let $P:\D \to \D$ be a proper holomorphic map of degree $D$, with $P(0)=0$, and fix $r_0 \in (0,1)$. Then there exists $r_1>0$ that depends only on $D,r_0$ such that $B(0,r_1)\subset P(B(0,r_0))$.
\end{lemma}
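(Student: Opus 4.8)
The plan is to recast the statement as a modulus estimate and then invoke the machinery already developed. Let $P\colon\D\to\D$ be proper holomorphic of degree $D$ with $P(0)=0$, and for $t\in(0,1)$ let $W_t$ be the connected component of $P^{-1}(B(0,t))$ containing $0$ (this makes sense since $P(0)=0$). Because $P$ is proper, $P^{-1}(\br{B(0,t)})$ is compact, so $\br{W_t}$ is a continuum contained in $\D$ with $0\in\br{W_t}$ and $\sup\{|z|:z\in\br{W_t}\}<1$. Moreover $W_t$ is open (a component of an open subset of a surface) and closed in $P^{-1}(B(0,t))$, so $P|_{W_t}\colon W_t\to B(0,t)$ is again proper and nonconstant, hence onto: $P(W_t)=B(0,t)$. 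Therefore it suffices to find $r_1\in(0,1)$, depending only on $D$ and $r_0$, such that $\br{W_{r_1}}\subset B(0,r_0)$; for then $B(0,r_1)=P(W_{r_1})\subset P(B(0,r_0))$, as desired.

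To locate such an $r_1$, I would first bound $\md(\D\setminus\br{W_t})$ from below. Applying Lemma \ref{Lemma - modulus}(a) with $U=U'=\D$, $g=P$, $V'=B(0,t)$ and $V=W_t$, and using the standard value $\md(\D\setminus\br{B(0,t)})=\frac{1}{2\pi}\log(1/t)$, yields
\begin{align*}
\md(\D\setminus\br{W_t})\ \geq\ \frac1D\,\md(\D\setminus\br{B(0,t)})\ =\ \frac{1}{2\pi D}\log\frac1t .
\end{align*}
On the other hand, if $\alpha_t\in\br{W_t}$ is a point of maximal modulus, then since $\br{W_t}$ is a continuum joining $0$ to $\alpha_t$, Gr\"otzsch's modulus theorem gives $\md(\D\setminus\br{W_t})\leq\mu(|\alpha_t|)$, where $\mu\colon(0,1)\to(0,\infty)$ is the fixed strictly decreasing bijection. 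Combining the two inequalities, $\mu(|\alpha_t|)\geq\frac{1}{2\pi D}\log(1/t)$, hence $|\alpha_t|\leq\mu^{-1}\bigl(\frac{1}{2\pi D}\log(1/t)\bigr)$. Since $\mu^{-1}$ is decreasing and its argument tends to $\infty$ as $t\to0$, we may fix $r_1\in(0,1)$ depending only on $D$ and $r_0$ — e.g.\ any $r_1<e^{-2\pi D\mu(r_0)}$ — so that $\mu^{-1}\bigl(\frac{1}{2\pi D}\log(1/r_1)\bigr)<r_0$. Then every point of $\br{W_{r_1}}$ has modulus at most $|\alpha_{r_1}|<r_0$, so $\br{W_{r_1}}\subset B(0,r_0)$, which is exactly what was needed.

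There is no serious obstacle here: the two substantive inputs — the modulus comparison under proper maps (Lemma \ref{Lemma - modulus}(a)) and the extremal property of the Gr\"otzsch annulus — are already available. The only point deserving a line of care is the elementary topological fact used in the reduction, namely that the restriction of a proper holomorphic map to a connected component of the preimage of a connected open set is again proper, hence surjective onto that set. If one prefers an explicit dependence $r_1=r_1(D,r_0)$, one can instead feed the inequality $\mu(|\alpha_t|)\geq\frac{1}{2\pi D}\log(1/t)$ into the asymptotics of Lemma \ref{Lemma - Grotzsch asymptotics}, obtaining $|\alpha_t|\leq 8\,t^{1/D}$ once $t$ is below a constant depending only on $D$, so that $r_1$ may be taken of the form $c(D)\,r_0^{D}$.
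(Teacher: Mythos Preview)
Your proof is correct and uses essentially the same approach as the paper: the modulus comparison from Lemma~\ref{Lemma - modulus}(a) together with Gr\"otzsch's theorem applied to the continuum $\br{W_t}$, yielding the same bound $r_1\approx e^{-2\pi D\mu(r_0)}$. The only difference is in framing: the paper starts with the \emph{maximal} $r_1$ such that $B(0,r_1)\subset P(B(0,r_0))$ and argues that the preimage component $W$ must then touch $\partial B(0,r_0)$, whereas you run the estimate for general $t$ and then choose $t=r_1$ small enough to force $\br{W_{r_1}}\subset B(0,r_0)$; your version makes the surjectivity step $P(W_{r_1})=B(0,r_1)$ explicit, which the paper leaves implicit.
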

\begin{proof}
Let $B(0,r_1) \subset P(B(0,r_0))$ be a ball of maximal radius, and let $W$ be the component of $P^{-1}(B(0,r_1))$ that contains $0$. Note that $\br W$ contains a point $z$ with $|z|=r_0$. Lemma \ref{Lemma - modulus}$\textrm{(a)}$ and Gr\"otzsch's modulus theorem yield
\begin{align*}
\frac{1}{2\pi} \log \frac{1}{r_1}=\md (\D \setminus \br B(0,r_1)) \leq D\md (\D\setminus \br W) \leq D \mu (r_0).
\end{align*}
Monotonicity of $\mu$ now yields a uniform lower bound for $r_1$.
\end{proof}

In our case, Koebe's distortion theorem in \eqref{Koebe for psi-diamB} implies that $\psi(\frac{1}{2}B)$ contains a ball $B(0,r_0)$ where $r_0$ is independent of $B$. Now, Lemma \ref{Lemma - Blaschke} applied to $P= \phi \circ f^n \circ \psi^{-1}$ shows that $P( \psi(\frac{1}{2}B))$ contains some ball $B(0,r_1)$, independent of $B$. Since $\phi$ is only scaling by a certain factor, we obtain that $f^n(\frac{1}{2}B)$ contains some ball $B(x,r_2)$, independent of $B$.

Finally, we show part (c). We first need the following lemma.

\begin{lemma} \label{Lemma Blaschke Lipschitz} 
Let $P:\D \to \D$ be a proper holomorphic map of degree $D$. Then for $\rho \in (0,1)$ the restriction $P:B(0,\rho) \to \D$ is $K$-Lipschitz, where $K$ depends only on $D,\rho$.
\end{lemma}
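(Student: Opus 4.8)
The plan is to reduce the statement to the explicit structure of proper holomorphic self-maps of the disk. First I would recall the classical fact that a proper holomorphic map $P:\D\to\D$ of degree $D$ is a finite Blaschke product
\begin{align*}
P(z)=\lambda\prod_{i=1}^{D}\frac{z-a_i}{1-\overline{a_i}\,z},\qquad |\lambda|=1,\ a_i\in\D.
\end{align*}
This can be justified quickly: let $a_1,\dots,a_D$ be the zeros of $P$ in $\D$ counted with multiplicity, form the corresponding Blaschke product $B$, and observe that $P/B$ extends to a holomorphic, nonvanishing function on $\D$ whose modulus tends to $1$ as $|z|\to 1$; applying the maximum principle to $P/B$ and to $B/P$ forces $P/B$ to be a unimodular constant.

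Next I would control the derivative on $B(0,\rho)$ uniformly in the location of the zeros. For a single Blaschke factor $b_a(z)=\frac{z-a}{1-\overline a z}$ one has $|b_a'(z)|=\frac{1-|a|^2}{|1-\overline a z|^2}$, and for $|z|\le\rho$ we get $|1-\overline a z|\ge 1-|a||z|\ge 1-\rho$, while $1-|a|^2\le 1$; hence $|b_a'(z)|\le (1-\rho)^{-2}$ on $B(0,\rho)$. By the product rule $P'=\lambda\sum_{i}b_{a_i}'\prod_{j\ne i}b_{a_j}$, and since each $b_{a_j}$ maps $\D$ into $\D$ (so $|b_{a_j}(z)|\le 1$), we obtain $|P'(z)|\le D(1-\rho)^{-2}$ for every $z\in B(0,\rho)$. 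Finally, since $B(0,\rho)$ is convex, for any $u,v\in B(0,\rho)$ the segment $[u,v]$ stays in $B(0,\rho)$, and integrating $P'$ along it yields $|P(u)-P(v)|\le D(1-\rho)^{-2}|u-v|$. Thus the lemma holds with $K=D(1-\rho)^{-2}$, depending only on $D$ and $\rho$.

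The main point requiring care is obtaining a bound \emph{independent of the zeros} $a_i$: the factor $|1-\overline a z|^{-1}$ blows up as $|a|\to 1$, and it is only after restricting to $|z|\le\rho<1$ that this is neutralized, uniformly, by the $1-|a|^2$ in the numerator. The Blaschke factorization makes this transparent; if one preferred to avoid invoking that classification, one could instead argue through Lemma \ref{Lemma - modulus} together with Koebe-type estimates applied to branches of $P$ away from critical values, but the computation above is more direct and is the route I would take.
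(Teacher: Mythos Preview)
Your proof is correct and follows essentially the same route as the paper: write $P$ as a finite Blaschke product, bound each factor's derivative by $(1-\rho)^{-2}$ on $B(0,\rho)$, and use the product rule together with $|b_{a_j}|\le 1$ to get $|P'|\le D(1-\rho)^{-2}$. The paper's argument is nearly identical (it simply states the derivative bound and sets $K=D/(1-\rho)^2$); your added justification of the Blaschke factorization and the explicit convexity/integration step are welcome details the paper leaves implicit.
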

\begin{proof}
Each proper self-map of the unit disk is a finite Blaschke product, so we can write $P(z)= e^{i\theta}\prod_{i=1}^D P_i(z)$, where $P_i(z)= (z-a_i)/(1-\br {a_i}z )$, $a_i\in \D$. Note that for $|z|<\rho$ we have
\begin{align*}
|P_i'(z)| \leq \frac{1}{|1-\br{a_i}z|^2} \leq \frac{1}{(1-\rho)^2}.
\end{align*}
Thus
\begin{align*}
|P'(z)| = \left| \sum_{i=1}^D P_i'(z) \prod_{j\neq i} P_j \right|\leq \sum_{i=1}^D |P_i'(z)|\leq \frac{D}{(1-\rho)^2}\eqqcolon K
\end{align*}
for $z\in B(0,\rho)$.
\end{proof}

For $u,v\in B$ by \eqref{existence of rho} one has $\psi(u),\psi(v)\in B(0,\rho)$. Thus, applying Lemma \ref{Lemma Blaschke Lipschitz} to $P=\phi\circ f^n \circ \psi^{-1}$, and using \eqref{Koebe for psi-diamB} we obtain
\begin{align*}
|f^n(u)-f^n(v)| &\simeq |\phi (f^n(u))-\phi(f^n(v))| = |P(\psi(u))-P(\psi(v))| \\
&\lesssim |\psi(u)-\psi(v)|\\
&\simeq \frac{|u-v|}{\diam B}.
\end{align*}
This completes the proof of Lemma \ref{Lemma - elevator}. \qed

\section{Hausdorff and Minkowski dimensions}\label{Section - Minkowski}

For a metric space $(X,d)$ and $s\in [0,\infty)$ the \textit{$s$-dimensional Hausdorff measure of $X$} is defined as
\begin{align*}
\mathcal H^s(X)= \lim_{\delta\to 0} \mathcal H^s_\delta(X),
\end{align*}
where $\mathcal H^s_\delta (X)= \inf \{ \sum_{i\in I} (\diam U_i)^s \}$ and the infimum is taken over all covers of $X$ by open sets $\{U_i\}_{i\in I}$ of diameter at most $\delta$. Then the \textit{Hausdorff dimension} of $(X,d)$ is
\begin{align*}
\dim_H X = \inf\{ s: \mathcal H^s(X) <\infty\} = \sup\{s: \mathcal H^s(X)=\infty\} \in [0,\infty].
\end{align*}

The Minkowski dimension is another useful notion of dimension for a fractal set $X\subset \R^n$. For $\varepsilon>0$ we define $n(\varepsilon)$ to be the maximal number of disjoint open balls of radii $\varepsilon>0$ centered at points $x\in X$. We then define  the upper and lower \textit{Minkowski dimensions}, respectively, as
\begin{align*}
\br{\dim}_M X&= \limsup_{\varepsilon\to 0} \frac{\log n(\varepsilon)}{\log(1/\varepsilon)},\\
\underline{\dim}_M X&= \liminf_{\varepsilon\to 0} \frac{\log n(\varepsilon)}{\log(1/\varepsilon)}.
\end{align*} 
If the two numbers agree, then we say that their common value $\dim_M X$ is the \textit{Minkowski}, or else, \textit{box dimension} of $X$.

It is easy to see that the definition of the Minkowski dimension is not affected if $n(\varepsilon)$ denotes instead the smallest number of open balls of radii $\varepsilon>0$ centered at $X$, that cover $X$. The important difference between the Hausdorff and Minkowski dimensions is that in the Hausdorff dimension we are taking into account coverings $\{U_i\}_{i\in I}$ with different weights $(\diam U_i)^s $ attached to each set, but in the Minkowski dimension we are considering only coverings of sets with equal diameters. It easily follows from the definitions that we always have
\begin{align*}
\dim_H X\leq \underline{\dim}_M X \leq \br{\dim}_M X.
\end{align*}

From now on, $n(\varepsilon)$ will denote the maximal number of disjoint open balls of radii $\varepsilon$, centered at points $x\in X$. Based on the distortion estimates that we developed in Section \ref{Section - Conformal elevator}, and using results of \cite{Fal} and \cite{SU} we have the following result that concerns the Hausdorff and Minkowski dimensions of Julia sets of semi-hyperbolic maps.
\begin{theorem}\label{Theorem - Hausdorff measure positive} Let $f:\widehat \C \to \widehat \C$ be a semi-hyperbolic rational map with $\mathcal J(f)\neq \widehat{\C}$ and $s\coloneqq \dim_H \J(f)$. We have 
\begin{enumerate}[label=\emph{(\alph*)}]
\item $0<\mathcal H^s (\J(f))<\infty$
\item $ \dim_M \J(f)= s= \dim_H \J(f)$
\item There exists a constant $C>0$ such that for all $\varepsilon>0$
$$ \frac{1}{C}\leq  n(\varepsilon) \varepsilon ^s\leq C,$$
\end{enumerate}
where $n(\varepsilon)$ is the maximal number of disjoint open balls  of radii $\varepsilon$ (in the spherical metric), centered in $\J(f)$.
\end{theorem}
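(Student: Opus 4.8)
The plan is to reduce all three parts to a single geometric fact: that $\J(f)$ carries a finite Borel measure $\mu$ with $\supp\mu=\J(f)$ satisfying
\begin{align*}
\frac{1}{C}\,r^s \ \le\ \mu(B(p,r)) \ \le\ C\,r^s
\end{align*}
for all $p\in\J(f)$ and all small $r>0$; in other words, that $\J(f)$ is Ahlfors $s$-regular, with $s=\dim_H\J(f)$. For $\mu$ one takes the $s$-conformal measure of $f$, whose existence and basic properties for semi-hyperbolic maps (non-atomicity, full support, and the fact that its exponent equals $\dim_H\J(f)$) I would quote from \cite{SU}; equivalently, once $\mathcal H^s(\J(f))$ is known to be positive and finite one may take $\mu=\mathcal H^s\lfloor\J(f)$. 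Granting the two-sided estimate: part (a) is immediate, since such a $\mu$ is mutually comparable with $\mathcal H^s\lfloor\J(f)$, whence $0<\mathcal H^s(\J(f))<\infty$; part (c) follows from a standard maximality argument, because the concentric $2\varepsilon$-balls of a maximal disjoint family of $\varepsilon$-balls centered in $\J(f)$ cover $\J(f)$, each such ball has $\mu$-measure $\simeq\varepsilon^s$, and $0<\mu(\J(f))<\infty$, which confines $n(\varepsilon)\varepsilon^s$ between two positive constants; and part (b) follows from $\dim_H\le\underline{\dim}_M\le\br{\dim}_M$ (always true) together with the bound $\br{\dim}_M\J(f)\le s$, a consequence of Ahlfors regularity — or alternatively of Falconer's quasi-self-similar sets theorem \cite{Fal}, as discussed below.

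The lower estimate $\mu(B(p,r))\gtrsim r^s$ is where the Conformal Elevator of Section \ref{Section - Conformal elevator} is used. Fix $p\in\J(f)$ and small $r>0$, and let $f^n$ be the iterate supplied by Lemma \ref{Lemma - elevator} for $B=B(p,r)$. By part (b) of that lemma, $f^n(B(p,r/2))\supseteq B(f^n(p),r_1)$ with $r_1$ a fixed constant; since $\J(f)$ is completely invariant and $\supp\mu=\J(f)$, a compactness argument gives a uniform lower bound $\mu(B(f^n(p),r_1))\gtrsim 1$, hence $\mu(f^n(B(p,r/2)))\gtrsim 1$. By part (c) of Lemma \ref{Lemma - elevator}, $f^n$ has Lipschitz constant $\lesssim 1/r$ on $B(p,r)$, so $|(f^n)'|\lesssim 1/r$ there, and conformality of $\mu$ yields
\begin{align*}
\mu\bigl(f^n(B(p,r/2))\bigr)\ \le\ \int_{B(p,r/2)}|(f^n)'|^s\,d\mu\ \lesssim\ r^{-s}\,\mu(B(p,r)).
\end{align*}
Combining the two bounds gives $\mu(B(p,r))\gtrsim r^s$. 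This already delivers $\mathcal H^s(\J(f))<\infty$ and the inequality $n(\varepsilon)\varepsilon^s\lesssim 1$ of part (c).

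The remaining input — and, I expect, the main obstacle — is the upper estimate $\mu(B(p,r))\lesssim r^s$, uniformly in $p\in\J(f)$. One would like to reverse the argument of the previous paragraph, but this requires a \emph{lower} bound $|(f^n)'|\gtrsim 1/r$ on $B(p,r)$, which the Conformal Elevator does not provide: near the finitely many critical points of $f$ lying in $\J(f)$ the derivative of $f^n$ is genuinely smaller, and part (a) of Lemma \ref{Lemma - elevator} compensates only with a H\"older-type estimate. This is the essential subtlety for semi-hyperbolic Julia sets, and it is where the precise estimates on conformal measures near non-recurrent critical points from \cite{SU} enter; for a self-contained argument one would combine those estimates with the Elevator applied at scales and base points bounded away from the critical set, using the H\"older estimate (a) to control the critical region. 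Once $\mu(B(p,r))\lesssim r^s$ is available, the mass-distribution principle gives $\mathcal H^s(\J(f))>0$ and the maximality argument gives $n(\varepsilon)\varepsilon^s\gtrsim 1$, finishing (a) and (c). Finally, for part (b) — and as an independent route to positivity and finiteness in (a) — one observes that the Conformal Elevator exhibits $\J(f)$ as a quasi-self-similar set in Falconer's sense: every small piece $\J(f)\cap B(p,r)$ is mapped by a suitable iterate $f^n$ onto a set containing $\J(f)\cap B(f^n(p),r_1)$, with distortion controlled through parts (a) and (c) of Lemma \ref{Lemma - elevator}; here one uses that $\J(f)$ is connected and locally connected, so that nearby points of $\J(f)$ lie in small connected subsets of $\J(f)$ and the H\"older estimate (a) is applicable to them. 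Invoking \cite{Fal} then yields $\dim_H\J(f)=\br{\dim}_M\J(f)=s$, which is part (b).
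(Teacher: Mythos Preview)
Your overall strategy---establish Ahlfors $s$-regularity of the conformal measure and read off (a), (b), (c)---is sound, and your derivation of the lower bound $\mu(B(p,r))\gtrsim r^s$ from parts (b) and (c) of the Elevator together with conformality of $\mu$ is correct. But the paper organizes the proof quite differently, and the comparison is instructive.

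The paper does \emph{not} go through Ahlfors regularity of a conformal measure. Parts (a) and (b) are simply quoted from \cite[Theorem~1.11]{SU}. For the lower inequality in (c) the paper argues directly with Hausdorff content: a maximal disjoint family of $\varepsilon$-balls, once doubled, covers $\J(f)$, so $\mathcal H^s_\varepsilon(\J(f))\le n(\varepsilon)(2\varepsilon)^s$; letting $\varepsilon\to 0$ and using (a) gives $n(\varepsilon)\varepsilon^s\gtrsim 1$. The only place the Elevator is used is for the \emph{upper} inequality $n(\varepsilon)\varepsilon^s\lesssim 1$, and here the argument is different from yours: rather than integrating $|(f^n)'|^s$ against a conformal measure, the paper verifies the hypothesis of Falconer's theorem \cite[Theorem~4]{Fal} by explicitly constructing, for each small ball $B=B(p,r)$, a map $\psi:\J(f)\to B\cap\J(f)$ with lower Lipschitz constant $\gtrsim r$. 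This $\psi$ is built as a composition $h\circ g$ of two \emph{right inverses}: first a right inverse $g$ of a fixed iterate $f^m$ taking $\J(f)$ into $B(f^n(p),r_1)\cap\J(f)$ (here $m$ is chosen uniformly so that $f^m$ maps every $r_1$-ball onto $\J(f)$), then a right inverse $h$ of $f^n$ taking $B(f^n(p),r_1)\cap\J(f)$ into $B\cap\J(f)$ (supplied by Elevator~(b)). The Lipschitz bounds on $f^m$ and on $f^n$ (the latter from Elevator~(c)) translate into lower Lipschitz bounds on $g$ and $h$, hence on $\psi$. No measure appears at all.

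So both routes cite \cite{SU} for the harder direction and use the Elevator for the remaining one; they differ in whether the Elevator is fed into a measure-theoretic estimate (your approach) or into a purely metric right-inverse construction (the paper's). Your approach is arguably more conceptual and gives Ahlfors regularity as a byproduct; the paper's avoids any reference to conformal measure and keeps the self-contained part of the argument metric. One correction to your aside on Falconer: the hypothesis needed (and the one the paper verifies) is a map $\psi:\J(f)\to B$ with lower Lipschitz constant $\simeq r$, i.e., the whole Julia set is pushed \emph{into} each small ball, not---as you wrote---that a small piece is blown up to definite size. Your appeal to connectedness and local connectedness of $\J(f)$ to make Elevator~(a) usable on pairs of points is also unnecessary here: the construction works with segments in $B$ and, in the paper's argument, part (a) is not used at all for this theorem.
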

\begin{proof} By considerations as in the beginning of Section \ref{Section - Conformal elevator}, we may assume that $\J(f) \subset \D$, and use the Euclidean metric which is comparable to the spherical metric. This will only affect the constant in part (c) of the theorem. 

The parts (a) and (b) follow from \cite[Theorem 1.11(e) and (g)]{SU}. Also, if $B_1,\dots,B_{n(\varepsilon)}$ are disjoint balls of radius $\varepsilon>0$ centered at $\J(f)$ then the collection $2B_1,\dots,2B_{n(\varepsilon)}$ covers $\J(f)$, where $2B_i$ has the same center as $B_i$ but twice the radius. Thus, we have
\begin{align*}
\mathcal H_{\varepsilon}^s (\J(f)) \leq n(\varepsilon)( 2\varepsilon)^s.
\end{align*} 
Taking limits, and using (a), we obtain 
$$0<\mathcal H^s(\J(f)) \leq 2^s \liminf_{\varepsilon\to 0} n(\varepsilon) \varepsilon^s$$
which shows the left inequality in (c).

For the right inequality in (c), we use the following result of Falconer.
\begin{theorem}[{{\cite[Theorem 4]{Fal}}}]\label{Theorem - Falconer}
Let $(F,d)$ be a compact metric space with $s=\dim_H F <\infty$. Suppose that there exist $K_0,r_0>0$ such that for any ball $B \subset F$ of radius $r<r_0$ there is a mapping $\psi :F\to B$ satisfying 
\begin{align*}
K_0 r \cdot d(x,y) \leq d(\psi(x),\psi(y))
\end{align*}
for all $x,y\in F$. Then $\limsup_{\varepsilon\to 0}n(\varepsilon) \varepsilon^s <\infty$.
\end{theorem}
We remark that the mapping $\psi:F\to B$ need not be continuous.

It remains to show that this theorem applies in our case. To show the existence of $\psi$ we will carefully use the distortion estimates of Lemma \ref{Lemma - elevator}. Let $r_0$ be so small that for $r<r_0$ and $p\in \J(f)$ the conclusions of Lemma \ref{Lemma - elevator} are true for the ball $B=B(p,r)$. In particular, there exists $r_1$, independent of $B$, such that
\begin{align}\label{Theorem - Falc-r_1}
B(f^n(p),r_1)\subset f^n(B)  
\end{align}
for some $n\in \N$. 

For each ball $B(q,r_1), q\in \J(f)$, there exists $m\in \N$ such that $f^m: B(q,r_1) \cap \J(f) \to \J(f)$ is surjective (e.g. see \cite[Corollary 14.2]{Mil}). We choose the smallest such $m$. Compactness of $\J(f)$ allows us to choose a uniform $m\in \N$, independent of $q\in\J(f)$. By the analyticity of $f^m$, there exists a constant $K_1>0$ such that for all $u,v\in \J(f)$ we have
\begin{align}\label{Theorem - Falc- f^m}
|f^m(u)-f^m(v)| \leq K_1 |u-v|.
\end{align}
Also, by Lemma \ref{Lemma - elevator}(c), there exists $K_2$ independent of $B$ such that
\begin{align}\label{Theorem - Falc- f^n}
|f^n(u)-f^n(v)|\leq K_2 \frac{|u-v|}{\diam B}
\end{align}
for $u,v\in B$. Here $n$ depends on the ball $B$ and is defined as in the comments preceding Lemma \ref{Lemma - elevator}.

Now, we can construct the desired $\psi: \J(f) \to B$. Let $g:\J(f) \to B(f^n(p), r_1) \cap \J(f)$ be any right inverse of the surjective map $f^m: B(f^n(p),r_1) \cap \J(f) \to \J(f)$. Also, the inclusion \eqref{Theorem - Falc-r_1} allows us to define a right inverse $h: B(f^n(p),r_1)\cap \J(f) \to B\cap \J(f)$ of $f^n$, restricted on a suitable subset of $B$. Now, let $\psi= h\circ g :\J(f) \to B\cap \J(f)$, and observe that by \eqref{Theorem - Falc- f^n}, and \eqref{Theorem - Falc- f^m} we have
\begin{align*}
|\psi(u)-\psi(v)|&= | h (g (u))-h ( g(v))| \\
&\geq \frac{\diam B}{K_2} |g(u)-g(v)|\\
&\geq \frac{\diam B}{K_1K_2} |u-v| \\
&= \frac{2}{K_1K_2} r|u-v|.
\end{align*}
Thus, the hypotheses of Theorem \ref{Theorem - Falconer} are satisfied with $K_0= 2/(K_1K_2)$.
\end{proof}

\section{Homogeneous sets and Julia sets}\label{Section - homogeneous}
Let $\mathcal P =\{C_k\}_{k\geq 0}$ be a packing, as defined in the Introduction, where $C_k$ are topological circles, surrounding topological open disks $D_k$ (in the plane or the sphere) such that $D_0 $ contains $C_k$ for $k\geq 1$, and ${D_k}, k\geq 1$ are disjoint. Then the set $\mathcal S= \br {D_0} \setminus \bigcup_{k\geq 1} D_k$ is the residual set $\mathcal S$ of the packing $\mathcal P$. 

In the following, one can use the Euclidean or spherical metric, but it is convenient to consider $C_0=\partial D_0$ as the boundary of the unbounded component of the packing $\mathcal P$ (see Figures \ref{fig:apollonian} and \ref{fig:test2}), and use the Euclidean metric to study the other disks $D_k,k\geq 1$. Thus, we will restrict ourselves to the use of the Euclidean metric in this section.

Following \cite{MS}, we say that the residual set $\mathcal S$ is \textit{homogeneous} if it satisfies properties $(1),(2)$ and $(3)$, or $(1),(2)$ and $(4)$ below. 
\begin{enumerate}
\item Each $D_k,k\geq 1$ is a  \textit{uniform quasi-ball}. More precisely, there exists a constant $\alpha \geq 1$ such that for each $D_k$ there exist inscribed and circumscribed, concentric circles of radii $r_k$ and $R_k$ respectively with 
\begin{align*}
\frac{R_k}{r_k} \leq \alpha.
\end{align*}
\item There exists a constant $\beta\geq 1$ such that for each $p\in \mathcal S$ and $0<r\leq \diam \mathcal S$ there exists a circle $C_k$ intersecting $B(p,r)$ such that 
\begin{align*}
\frac{1}{\beta} r\leq \diam C_k \leq \beta r.
\end{align*}
\item The circles $C_k$ are \textit{uniformly relatively separated}. This means that there exists $\delta>0$ such that 
\begin{align*}
\Delta(C_j,C_k)\coloneqq \frac{\dist(C_j,C_k)}{\min \{\diam C_j, \diam C_k \}} \geq \delta
\end{align*}
for all $j\neq k$.
\item The disks $D_k,k\geq 1$ are \textit{uniformly fat}. By definition, this means that there exists $\tau>0$ such that for every ball $B(p,r)$ centered at $D_k$ that does not contain $D_k$, we have
\begin{align*}
m_2 (D_k \cap B(p,r)) \geq \tau r^2,
\end{align*}
where $m_2$ denotes the $2$-dimensional Lebesgue measure. (Here one can use the spherical measure for packings on the sphere.)
\end{enumerate} 

Condition $(1)$ means that the sets $D_k$ look like round balls, while $(2)$ says that the circles $C_k$ exist in all scales and all locations in $\mathcal S$. Condition $(3)$ forbids two ``large" circles $C_k$ to be close to each other in some uniform manner. Note that this only makes sense when $\br {D_k}, k\geq 1$ are disjoint, e.g. in the case of a Sierpi\'nski carpet. Finally, $(4)$ is used to replace $(3)$ when we are working with fractals such as the Sierpi\'nski gasket, or generic Julia sets regarded as packings, where $\br {D_k}$ are not disjoint. We now summarize some interesting properties of homogeneous sets, that are not needed though for the proof of Theorem \ref{Theorem - Main}. 

A set $E\subset \R^n$ is said to be \textit{porous} if there exists a constant $0<\eta<1$ such that for all sufficiently small $r>0$ and all $x\in E$, there exists a point $y\in \R^n$ such that 
\begin{align*}
B(y,\eta r) \subset B(x,r)\setminus E.
\end{align*}
A Jordan curve $\gamma \subset \C$ is called a $K$-\textit{quasicircle} if for all $x,y\in \gamma$ there exists a subarc $\gamma_0 $ of $\gamma$ joining $x$ and $y$ with $\diam \gamma_0 \leq K |x-y|$. The \textit{(Ahlfors regular) conformal dimension} of a metric space $(X,d)$, denoted by $\textrm{(AR)}\Cdim X$, is the infimum of the Hausdorff dimensions among all (Ahlfors regular) metric spaces that are quasisymmetrically equivalent to $(X,d)$. For more background see Chapters $10$ and $15$ in \cite{He}.    

\begin{prop}\label{Proposition - Homogeneous}
Let $\mathcal S$ be the residual set of a packing $\mathcal P$, satisfying $(1)$ and $(2)$. Then
\begin{enumerate}[label=\emph{(\alph*)}]
\item $\mathcal S$ is locally connected.
\item $\mathcal S$ is porous.
\item $\dim_H\mathcal S \leq 2- \delta$, where $\delta>0$ depends only on the constants in $(1),(2)$.
\end{enumerate} 
Furthermore, if instead of $(1)$ and $(2)$ we only assume that $\mathcal S$ satisfies $(3)$ and the topological circles $C_k=\partial D_k$ are uniform quasicircles, then $\Cdim \mathcal S >1$.
\end{prop}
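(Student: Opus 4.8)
The plan is to read off $(a)$, $(b)$, $(c)$ from a single observation plus porosity, and to reserve the real effort for the conformal dimension statement.

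\emph{Parts $(a)$--$(c)$.} First I would record a consequence of $(1)$: each $D_k$ contains a ball of radius $r_k$ and sits inside a concentric ball of radius $R_k\le\alpha r_k$, so $m_2(D_k)\ge \pi r_k^2\gtrsim(\diam D_k)^2$; since the $D_k$ with $k\ge1$ are disjoint subsets of the bounded region $D_0$, the series $\sum_{k\ge1}(\diam D_k)^2$ converges and hence $\diam D_k\to0$. Part $(a)$ is then the classical fact that a set obtained from a closed Jordan domain $\br{D_0}$ by deleting a null sequence of pairwise disjoint open Jordan domains with (locally connected) Jordan boundaries $C_k$ is locally connected -- the kind of statement supplied by Whyburn's theory cited in the Introduction, and provable directly by building, for $p\in\mathcal S$ and small $r$, a connected neighborhood of $p$ in $B(p,r)\cap\mathcal S$ after removing from $\br{D_0}$ only the (all but finitely many tiny) $D_k$ that meet $B(p,r)$. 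For $(b)$, fix $x\in\mathcal S$ and small $r>0$, and apply $(2)$ to $B(x,cr)$ with $c=c(\alpha,\beta)\in(0,1)$ small: this gives a circle $C_k$ meeting $B(x,cr)$ with $\diam C_k\simeq cr$, and by $(1)$ the disk $D_k$ contains a ball $B(y,r_k)$ with $r_k\simeq cr$; since $C_k\subset\br{D_k}$ meets $B(x,cr)$ and $\diam D_k\le\beta cr$, the disk $D_k$ -- in particular $B(y,r_k)$ -- lies in $B(x,r)$ once $c$ is small enough, and $B(y,r_k)\subset D_k$ misses $\mathcal S$. Thus $\mathcal S$ is porous with a constant $\eta=\eta(\alpha,\beta)$. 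Part $(c)$ is then the standard dimension bound for porous subsets of $\R^2$: covering $\mathcal S$ at mesh $2^{-n}$ and iterating porosity, a fixed proportion of each covering ball is forced into the complement of $\mathcal S$ at the next scale, so the covering numbers grow strictly slower than $(2^n)^2$, giving $\br{\dim}_M\mathcal S\le 2-\delta$, hence $\dim_H\mathcal S\le2-\delta$ with $\delta$ depending only on $\eta$.

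\emph{The conformal dimension assertion.} Here the approach is modulus-theoretic. I would fix two disjoint nondegenerate subarcs $E,F$ of $C_0=\partial D_0$ and let $\Gamma$ be the family of all curves contained in $\mathcal S$ joining $E$ to $F$, and aim to show that the combinatorial $p$-modulus $\md_p(\Gamma,\mathcal G_n)$, taken over coverings $\mathcal G_n$ of $\mathcal S$ of mesh $2^{-n}$, stays bounded away from $0$ as $n\to\infty$ for some fixed exponent $p>1$. By the lower bounds for conformal dimension in terms of modulus of curve families (results of Tyson and others; see the background in \cite{He}, and the combinatorial-modulus description of conformal dimension developed by Keith--Kleiner and Carrasco Piaggio), such a uniform lower bound at an exponent $p>1$ forces $\Cdim\mathcal S>1$. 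The hypotheses enter as follows: a curve $\gamma\in\Gamma$ lies in $\mathcal S$ and so never enters a complementary disk $D_k$, hence must thread between the peripheral curves; since the $C_k$ are uniform $K$-quasicircles -- so no individual $C_k$ can be pinched to a slit that $\gamma$ could slip past -- and are uniformly relatively separated in the sense of $(3)$, at each scale $\gamma$ is compelled to pass near a definite number of disjoint, uniformly well-spaced obstacles. This ``uniform branching'' is what makes $\Gamma$ rich enough that any admissible function must spread its mass broadly over each covering, and therefore cannot have small $p$-mass when $p>1$.

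\emph{Main obstacle.} The hard part is converting the branching heuristic into an honest bound $\md_p(\Gamma,\mathcal G_n)\gtrsim1$ uniform in $n$. I would try an induction on scale: given $\rho$ admissible for $\Gamma$ at mesh $2^{-n}$, restrict to a fixed covering ball, pass to its children, and estimate from below how the admissibility inequality along sub-curves of $\Gamma$ is inherited, using $(3)$ and the quasicircle bound both to control the branching combinatorics and -- crucially -- to guarantee that the relevant sub-families of crossing curves are nonempty (this is exactly where the uniform quasicircle hypothesis prevents one small obstacle from blocking all crossings at a given scale). Carrying out this bookkeeping, pinning down $p>1$ explicitly in terms of $K$ and $\delta$, and coping with the fact that the $\br{D_k}$ need not be disjoint but only relatively separated, is where essentially all of the work lies; everything else is soft. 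A more hands-on alternative would invoke the quasisymmetric invariance of the property ``uniformly relatively separated family of uniform quasicircles'' to reduce $\Cdim\mathcal S=1$ to a contradiction with the Hausdorff dimension of a quasisymmetric copy of $\mathcal S$ being too close to $1$, but quantifying ``too many quasicircles force dimension exceeding $1$'' again reduces to essentially the same modulus estimate.
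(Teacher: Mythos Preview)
For parts (a)--(c), your argument matches the paper's almost exactly: both deduce $\diam D_k\to 0$ from the area bound implied by (1), invoke a standard local-connectivity lemma (the paper cites \cite[Lemma 19.5]{Mil}), prove porosity by applying (2) at scale $cr$ to trap an entire $D_k$ inside $B(p,r)$, and then appeal to the standard porosity-implies-dimension-drop fact (the paper cites \cite[Theorem 3.2]{Sa}).

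For the conformal-dimension assertion, however, your route diverges substantially from the paper's. The paper does not touch combinatorial modulus at all: it invokes Mackay's criterion \cite[Theorem 1.1]{Mac} that a doubling, \emph{annularly linearly connected} (ALC) metric space has $\Cdim>1$, and then verifies ALC for $\mathcal S$ by a direct path-surgery argument --- take any path in the annulus $A(p,r,2r)$ joining $x$ to $y$, and replace each excursion into a complementary disk $D_k$ by a subarc of $C_k$, using (3) to bound how many ``large'' $C_k$ can approach $p$ and the uniform quasicircle property to control the diameters of the replacement arcs. Once Mackay's theorem is taken as a black box this is entirely elementary, and it sidesteps precisely the hard modulus bookkeeping that you correctly identify as the main obstacle in your approach. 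Your combinatorial-modulus strategy is not wrong in spirit --- it is essentially what underlies Mackay's theorem --- but you do not complete it, and carrying it out from scratch here would amount to reproving a good part of that theorem. (As a side remark, your concern that the $\br{D_k}$ ``need not be disjoint but only relatively separated'' is unfounded in this setting: condition (3) gives $\dist(C_j,C_k)>0$ for $j\neq k$, so the closed disks are pairwise disjoint.)
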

\begin{proof}
By $(1)$, each $D_k$ contains a ball of diameter comparable to $\diam D_k$. Thus, summing the areas of the sets $D_k$, and noting that they are all contained in $\br {D_0}\subset \C$, we see that for each $\varepsilon>0$, there can only be finitely many sets $D_k$ with $\diam D_k>\varepsilon$. We conclude that $\mathcal S$ is locally connected (see \cite[Lemma 19.5]{Mil}).

Condition $(2)$ implies that for $r\leq \diam \mathcal S$, every ball $B(p,r)$ centered at $\mathcal S$ intersects a curve $C_k$ of diameter comparable to $r$. Let $c<1$ and consider the ball $B(p,cr)\subset B(p,r)$. Then $B(p,cr)$ intersects a curve $C_k$ of diameter comparable to $cr$, and if $c$ is sufficiently small but uniform, then $C_k\subset B(p,r)$. Thus $B(p,r)$ contains a curve $C_k$ of diameter comparable to $r$. By $(1)$, $D_k$ contains a ball of radius comparable to $\diam D_k$ and thus comparable to $r$ (note that here we use the Euclidean metric). Hence, $B(p,r)\setminus \mathcal S$ contains a ball of radius comparable to $r$. This completes the proof that $\mathcal S$ is porous. 

It is a standard fact that a porous set $E\subset \R^n$ has Hausdorff dimension bounded away from $n$, quantitatively (see \cite[Theorem 3.2]{Sa}). Thus, (b) implies (c).

For our last assertion we will use a criterion of Mackay \cite[Theorem 1.1]{Mac} which asserts that a doubling metric space which is {annularly linearly connected} has conformal dimension strictly greater than $1$. A connected metric space $X$ is \textit{annularly linearly connected} (abbr. ALC) if there exists some $L\geq 1$ such that for every $p\in X,r>0$, and $x,y\in X$ in the annulus $A(p,r,2r)\coloneqq \br B(p,2r)\setminus B(p,r)$ there exists an arc $J\subset X$ joining $x$ to $y$ that lies in a slightly larger annulus $A(p,r/L,2Lr)$.

It suffices to show that $\mathcal S$ is ALC. The idea is simple, but the proof is technical, so we only provide a sketch. Let $x,y\in A(p,r,2r)\cap \mathcal S$, and consider a path $\gamma \subset A(p,r,2r)$ (not necessarily in $\mathcal S$) that joins $x$ and $y$. The idea is to replace the parts of the path $\gamma$ that lie in the complementary components $D_k$ of $\mathcal S$ by arcs in $C_k=\partial D_k$ and then make sure that the resulting arc stays in a slightly larger annulus $A(p,r/L,2Lr)$. The assumption that the curves $C_k$ are quasicircles guarantees that the subarcs that we will use are not too ``large", and condition $(3)$ guarantees that the ``large" curves $C_k$ do not block the way from $x $ to $y$, since these curves are not allowed to be very close to each other.

Using $(3)$, we can find uniform constants $a,L_1\geq 1$ such that there exists at most one curve $C_{k_0}$ with $\diam C_{k_0} \geq  r/a$ that intersects $B(p, r/L_1)$. We call a curve $C_k$ \textit{large} if its diameter exceeds $r/a$, and otherwise we call it \textit{small}. We enlarge slightly the annulus (maybe using a larger $L_1$) to an annulus $A(p,r/L_1,2rL_1)$ so that $B(p,2rL_1)$ contains all small curves $C_k$ that intersect $\gamma$. We now check all different cases. 

If $\gamma$ meets the large $C_{k_0}$ that intersects $B(p,r/L_1)$, using the fact that $C_{k_0}$ is a quasicircle, we can enlarge the annulus to an annulus $A(p,r/L_2,2rL_2)$ with a uniform $L_2\geq 1$, so that $x$ can be connected to $y$ by a path in $A(p,r/L_2,2rL_2)\setminus D_{k_0}$. We call the resulting path $\gamma$. Note that here we have to assume that $C_{k_0}\neq C_0$, so that the path $\gamma$ does not lie in the unbounded component of the packing and it passes through several curves $C_k$ on the way from $x$ to $y$. The case $C_{k_0}=C_0$, which occurs only when $x,y\in \br {D_0}$, is similar and in the previous argument we just have to choose a path $\gamma$ that lies in $A(p,r/L_2,2rL_2)\cap  \br {D_0}$. We still assume that $B(p,2rL_2)$ contains all small curves $C_k$ that intersect $\gamma$.

If $\gamma$ meets a small $C_k$ that does not intersect $B(p,r/L_2)$, then we can replace the subarcs of $\gamma$ that lie in $D_k$ with arcs in $C_k$ that have the same endpoints. The resulting arcs will lie in the annulus by construction. Next, if $\gamma$ meets a small $C_k$ that does intersect $B(p,r/L_2)$, we follow the same procedure as before, but now we have to choose the sub-arcs of $C_k$ carefully, so that they do not approach $p$ too much. This can be done using the assumption that the curves $C_k$ are uniform quasicircles. The resulting arcs will lie in a slightly larger annulus $A(p,r/L_3, 2rL_3)$, where $L_3\geq 1$ is a uniform constant.

Finally, if $\gamma$ intersects a large $C_k$ which does not meet $B(p,r/L_3)$ we can use the assumption that $C_k$ is a quasicircle to replace the subarcs of $\gamma$ that lie in $D_k$ with subarcs of $C_k$ that have diameter comparable to $r$. Thus, a larger annulus $A(p,r/L_4,2rL_4)$ will contain the arcs of $C_k$ that we obtain in this way.

We need to ensure that this procedure indeed yields a path that joins $x$ and $y$ inside $A(p,r/L_4,2rL_4)$. This follows from the fact that $\diam D_k \to 0$. The latter fact follows from the assumption that the curves $C_k$ are uniform quasicircles, which in turn implies that each $D_k$ contains a ball of radius comparable to $\diam D_k$, i.e., $(1)$ is true (for a proof of this assertion see \cite[Proposition 4.3]{Bo}).
\end{proof}

Next, we continue our preparation for the proof of Theorem \ref{Theorem - Main}.

From now on, we will be using a slightly more general definition for a packing $\mathcal P=\{ C_k\}_{k\geq 0}$, suitable for Julia sets, where the sets $D_k$ are allowed to be simply connected open sets and $C_k=\partial D_k$ (so they are not necessarily topological circles). Making abuse of terminology, we still call $C_k$ a ``curve".

As we will see in Lemma \ref{Lemma - N - n}, a homogeneous set has the special property that there is some important relation between the curvature distribution function $N(x)$ and the maximal number of disjoint open balls $n(\varepsilon)$, centered at $\mathcal S$. Thus, considerations about the residual set $\mathcal S$, which are reflected by $n(\varepsilon)$, can be turned into considerations about the complementary components $D_k$, which are comprised in $N(x)$.

The following lemma is proved in \cite{MS} and its proof is based on area and counting arguments. 

\begin{lemma}[{{\cite[Lemma 3]{MS}}}]\label{Lemma - Mereknov - Sabitova} 
Assume that $\mathcal S$ is the residual set of a packing $\mathcal P= \{C_k\}_{k\geq 0}$ that satisfies $(1)$ and $(3)$ (or $(1)$ and $(4)$). For any $\beta>0$, there exist constants $\gamma_1, \gamma_2>0$ depending only on $\beta$ and the constants in $(1), (3)$ (or $(1), (4)$) such that for any collection $\mathcal C$ of disjoint open balls of radii $r>0$ centered in $\mathcal S$ we have the following statements:
\begin{enumerate}[label=\emph{(\alph*)}]
\item\label{Lemma - M-S-1}{There are at most $\gamma_1$ balls in $\mathcal C$ that intersect any given $C_k$ with }\\\notag
$$\diam C_k \leq \beta r.$$
\item \label{Lemma - M-S-2}{ There are at most $\gamma_2$ curves $C_k$ intersecting any given ball in $2\mathcal C$ and satisfying}\\\notag
$$\frac{1}{\beta} r\leq \diam C_k, $$
\textrm{where $2\mathcal C$ denotes the collection of open balls with the same centers as the ones in $\mathcal C$, but with radii $2r$. }
\end{enumerate}
\end{lemma}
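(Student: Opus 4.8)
The plan is to prove both parts by elementary area (packing) estimates; the only genuine content is the control of curves $C_k$ whose diameter is much larger than $r$ in part (b).

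Part (a) uses nothing beyond the disjointness of $\mathcal C$. Fix $C_k$ with $\diam C_k\le\beta r$ and a point $z\in C_k$. Every ball $B(c,r)\in\mathcal C$ meeting $C_k$ has its center within distance $r+\diam C_k\le(1+\beta)r$ of $z$, hence $B(c,r)\subset B(z,(2+\beta)r)$; comparing $2$-dimensional Lebesgue measures of the disjoint balls of $\mathcal C$ that fit there shows that at most $(2+\beta)^2$ of them can meet $C_k$, so $\gamma_1=(2+\beta)^2$ works.

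For part (b) I first record a consequence of $(1)$: for $k\ge1$ the inscribed and circumscribed radii $r_k\le R_k$ of $D_k$ satisfy $2r_k\le\diam C_k\le2R_k\le2\alpha r_k$. The lower bound holds because $C_k=\partial D_k$ separates the common center $o_k\in D_k$ from $\infty$ inside the round annulus $\overline{B}(o_k,R_k)\setminus B(o_k,r_k)$, hence meets every ray issuing from $o_k$ at a point outside $B(o_k,r_k)$, giving two points of $C_k$ at mutual distance $\ge2r_k$; thus $\diam C_k\simeq r_k\simeq R_k\simeq\diam D_k$. Now fix a ball $B(c,2r)\in2\mathcal C$. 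The heart of the matter is the claim that there exist constants $M\ge1$ and $\theta>0$, depending only on $\beta$ and the constants in $(1),(4)$ (resp.\ $(1),(3)$), so that each relevant curve $C_k$ with $k\ge1$, $\diam C_k\ge r/\beta$, and $C_k\cap B(c,2r)\ne\emptyset$ satisfies
\begin{align*}
m_2\bigl(D_k\cap B(c,Mr)\bigr)\ge\theta r^2 .
\end{align*}
Granting this, the disks $D_k$, $k\ge1$, being pairwise disjoint and all of these pieces lying in $B(c,Mr)$, comparison of areas bounds the number of such curves by $\pi M^2/\theta$; adding $1$ for the possible exceptional curve $C_0$ (and, in the $(1),(3)$ case, $1$ more for the unique ``large'' curve below) yields $\gamma_2$. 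To prove the displayed inequality pick $z\in C_k\cap B(c,2r)$. Assuming $(1),(4)$: if $\diam D_k>2r$, take $p\in D_k$ with $|p-z|<r/2$, so $B(p,r)$ is centered in $D_k$ and, since $\diam D_k>2r$, does not contain $D_k$; then $(4)$ gives $m_2(D_k\cap B(p,r))\ge\tau r^2$ while $B(p,r)\subset B(c,4r)$. If $\diam D_k\le2r$, then $r/\beta\le\diam C_k\le2r$, and by $(1)$ the inscribed ball $B(o_k,r_k)\subset D_k$ has $r_k\ge\diam C_k/(2\alpha)\ge r/(2\alpha\beta)$ and, since $|o_k-c|\le R_k+2r\le(\alpha+2)r$, lies in $B(c,(\alpha+3)r)$; so $M,\theta$ depending only on $\alpha,\beta,\tau$ work. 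Assuming $(1),(3)$: put $A=5/\delta$ and call $C_k$ \emph{large} if $\diam C_k\ge Ar$, \emph{medium} otherwise. Two large curves meeting $B(c,2r)$ would lie within distance $4r$ of each other, contradicting $(3)$ since then $\dist(C_j,C_k)\ge\delta Ar=5r$; hence at most one large curve meets $B(c,2r)$. For a medium curve, $(1)$ again produces $B(o_k,r_k)\subset D_k$ with $r_k\ge r/(2\alpha\beta)$ and $R_k\le\alpha\,\diam C_k/2\le\alpha Ar/2$, so $B(o_k,r_k)\subset B(c,(\alpha A+2)r)$, proving the inequality for all medium curves with $M,\theta$ depending only on $\alpha,\beta,\delta$.

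The step I expect to be the main obstacle is precisely the large-curve regime in part (b): a curve $C_k$ with $\diam C_k\gg r$ that merely grazes $B(c,2r)$ need not contribute any definite area near $c$ from its inscribed ball, which may be far away, so one is forced to invoke either the separation hypothesis $(3)$—to see that such large curves cannot accumulate near $c$—or the fatness hypothesis $(4)$—to harvest area $\gtrsim r^2$ of $D_k$ inside a bounded ball about $c$ irrespective of the size of $D_k$. A minor technical nuisance is that $(4)$ is stated for balls centered in $D_k$ rather than on $C_k=\partial D_k$, which is the reason for passing to a nearby interior point $p$.
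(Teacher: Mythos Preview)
The paper does not supply its own proof of this lemma: it is quoted as \cite[Lemma 3]{MS} with the remark that ``its proof is based on area and counting arguments,'' and no argument is given in the paper itself. Your proposal is exactly an area/counting argument of the kind indicated, and it is correct: part (a) is an immediate packing bound, and in part (b) you correctly isolate the one nontrivial point---curves $C_k$ much larger than $r$---and handle it via the fatness hypothesis $(4)$ (harvesting area $\gtrsim r^2$ of $D_k$ near $c$ regardless of the size of $D_k$) or, alternatively, via the separation hypothesis $(3)$ (showing at most one such large curve can meet $B(c,2r)$), with the remaining ``medium'' curves controlled through the inscribed balls furnished by $(1)$. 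This matches the approach in \cite{MS}; the only cosmetic difference is that you make the large/medium dichotomy and the constants fully explicit.
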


Using this lemma one can prove a relation between the curvature distribution function $N(x)= \# \{ k: (\diam C_k)^{-1} \leq x\}$ (using the Euclidean metric) and the maximal number $n(\varepsilon)$ of disjoint open balls of radius $\varepsilon$, centered at $\mathcal S$. Namely, we have the following lemma.

\begin{lemma}\label{Lemma - N - n} 
Assume that the residual set $\mathcal S$ of a packing $\mathcal P$ satisfies $(1),(2)$ and $(3)$ or $(1),(2)$ and $(4)$. Then there exists a constant $C>0$ such that for all small $\varepsilon>0$ we have
\begin{align*}
\frac{1}{C} n(\varepsilon) \leq N(\beta/\varepsilon) \leq C n(\varepsilon), 
\end{align*}
where $\beta$ is the constant in $(2)$. 
\end{lemma}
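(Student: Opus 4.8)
The plan is to establish the two inequalities separately by a double-counting argument, using condition $(2)$ to link balls to curves of comparable diameter, and Lemma \ref{Lemma - Mereknov - Sabitova} to bound the multiplicity of these incidences. Fix a maximal collection $\mathcal C = \{B(p_i,\varepsilon)\}_{i=1}^{n(\varepsilon)}$ of disjoint open balls of radius $\varepsilon$ centered in $\mathcal S$. For the lower bound $n(\varepsilon) \lesssim N(\beta/\varepsilon)$, I would associate to each ball $B(p_i,\varepsilon)$, via condition $(2)$, a curve $C_{k(i)}$ with $C_{k(i)} \cap B(p_i,\varepsilon) \neq \emptyset$ and $\tfrac{1}{\beta}\varepsilon \le \diam C_{k(i)} \le \beta \varepsilon$. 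In particular each such $C_{k(i)}$ satisfies $(\diam C_{k(i)})^{-1} \le \beta/\varepsilon$, so the set of indices $\{k(i)\}$ that appear is a subset of $\{k : (\diam C_k)^{-1} \le \beta/\varepsilon\}$, which has cardinality $N(\beta/\varepsilon)$. It remains to bound how many distinct balls $B(p_i,\varepsilon)$ can be assigned the same curve $C_k$: since $\diam C_k \le \beta \varepsilon$, Lemma \ref{Lemma - Mereknov - Sabitova}\ref{Lemma - M-S-1} (applied with this $\beta$ and $r = \varepsilon$) says at most $\gamma_1$ balls of $\mathcal C$ intersect any such $C_k$. Hence $n(\varepsilon) \le \gamma_1 N(\beta/\varepsilon)$.

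For the upper bound $N(\beta/\varepsilon) \lesssim n(\varepsilon)$, I would go the other direction: every curve $C_k$ counted by $N(\beta/\varepsilon)$ satisfies $\diam C_k \ge \varepsilon/\beta$, so we need to show there cannot be too many such large curves. Pick for each such $k$ a point $q_k \in C_k \subset \overline{D_0}$ (or, to keep everything inside $\mathcal S$, a point $q_k$ on $C_k \cap \mathcal S$, which is nonempty since $C_k = \partial D_k \subset \mathcal S$). By maximality of the disjoint collection $\mathcal C$, every point of $\mathcal S$ — in particular each $q_k$ — lies within $2\varepsilon$ of some center $p_i$, so $C_k$ intersects the ball $2B(p_i,\varepsilon) = B(p_i,2\varepsilon)$ from the doubled collection $2\mathcal C$. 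This gives a map $k \mapsto i(k)$ from the $N(\beta/\varepsilon)$ curves into $\{1,\dots,n(\varepsilon)\}$. To bound its multiplicity, I invoke Lemma \ref{Lemma - Mereknov - Sabitova}\ref{Lemma - M-S-2} (again with the same $\beta$ and $r = \varepsilon$): at most $\gamma_2$ curves $C_k$ with $\diam C_k \ge \varepsilon/\beta$ can intersect any fixed ball of $2\mathcal C$. Therefore $N(\beta/\varepsilon) \le \gamma_2 n(\varepsilon)$, and taking $C = \max\{\gamma_1,\gamma_2\}$ finishes the proof.

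The routine wrinkle to address is that Lemma \ref{Lemma - Mereknov - Sabitova} is stated for packings whose $D_k$ are topological disks (conditions $(1)$ and $(3)$, or $(1)$ and $(4)$), whereas here the $C_k$ need not be topological circles; but the hypotheses of the present lemma already assume $\mathcal S$ is homogeneous in exactly this sense, and the area/counting proof of Lemma \ref{Lemma - Mereknov - Sabitova} uses only conditions $(1),(3)$ or $(1),(4)$, so it applies verbatim. A second small point: condition $(2)$ requires $0 < r \le \diam \mathcal S$, which is why the statement restricts to small $\varepsilon$; one just takes $\varepsilon < \diam \mathcal S$ (and, if needed, small enough that the relevant curves exist). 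The only genuine step requiring care is making sure the two applications of Lemma \ref{Lemma - Mereknov - Sabitova} are made with matching parameters — the same $\beta$ from condition $(2)$ and the radius $r = \varepsilon$ — so that $\gamma_1,\gamma_2$ are the uniform constants promised there; no estimate here is delicate, so I do not expect a real obstacle.
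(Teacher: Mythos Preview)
Your proposal is correct and follows essentially the same approach as the paper: both arguments take a maximal collection $\mathcal C$ of disjoint $\varepsilon$-balls centered in $\mathcal S$, use condition $(2)$ together with Lemma~\ref{Lemma - Mereknov - Sabitova}\ref{Lemma - M-S-1} to get $n(\varepsilon)\le \gamma_1 N(\beta/\varepsilon)$, and then use the fact that $2\mathcal C$ covers $\mathcal S$ (so every $C_k$ with $\diam C_k\ge \varepsilon/\beta$ meets some doubled ball) together with Lemma~\ref{Lemma - Mereknov - Sabitova}\ref{Lemma - M-S-2} to get $N(\beta/\varepsilon)\le \gamma_2\, n(\varepsilon)$. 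Your additional remarks about $C_k\subset \mathcal S$, the range of $\varepsilon$, and the applicability of Lemma~\ref{Lemma - Mereknov - Sabitova} when the $C_k$ are not topological circles are all accurate and in the spirit of the paper's surrounding discussion.
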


The proof is essentially included in the proof of \cite[Proposition 2]{MS} but we include it here for completeness.

\begin{proof} Let $\mathcal C$ be a maximal collection of disjoint open balls of radius $\varepsilon$, centered at $\mathcal S$. For each ball $C\in \mathcal C$, by condition $(2)$ there exists $C_k$ such that $C_k\cap C \neq \emptyset $ and $\frac{1}{\beta} \varepsilon \leq \diam C_k \leq \beta \varepsilon $. On the other hand, Lemma \ref{Lemma - Mereknov - Sabitova}(a) implies that for each such $C_k$ there exist at most $\gamma_1$ balls in $\mathcal C$ that intersect it. Thus
\begin{align*}
n(\varepsilon)= \# \mathcal C \leq \gamma_1\cdot  \# \biggl\{ k:  \frac{1}{\beta} \varepsilon \leq \diam C_k \leq \beta \varepsilon \biggr\} \leq \gamma_1 N(\beta/\varepsilon).
\end{align*}

Conversely, note that by the maximality of $\mathcal C$, it follows that $2\mathcal C$ covers $\mathcal S$. Hence, if $C_k$ is arbitrary satisfying $\diam C_k\geq \frac{1}{\beta}\varepsilon$, it intersects a ball $2C$ in $2\mathcal C$. For each such ball $2C$, Lemma \ref{Lemma - Mereknov - Sabitova}(b) implies that there exist at most $\gamma_2$ curves $C_k$ with $\diam C_k\geq \frac{1}{\beta}\varepsilon$ that intersect it. Thus
\[
N(\beta/\varepsilon)= \#\biggl\{k: \diam C_k\geq \frac{1}{\beta}\varepsilon\biggr\} \leq \gamma_2  \cdot \# 2\mathcal C = \gamma_2 n(\varepsilon).\qedhere
\]
\end{proof}

Finally, we proceed to the proofs of Theorem \ref{Theorem - Main} and Corollary \ref{Corollary}.

\begin{proof}[Proof of Theorem \ref{Theorem - Main}]
By considerations as in the beginning of Section \ref{Section - Conformal elevator}, we assume that $\J(f)\subset \D$, and we will use the Euclidean metric since this does not affect the conclusion of the Theorem. Let $C_0$ be the boundary of the unbounded Fatou component, $D_k,k\geq 1$ be the sequence of bounded Fatou components, and $C_k=\partial D_k$. Then $\mathcal P=\{C_k\}_{k\geq 0}$ can be viewed as a packing, and $\mathcal S=\J(f)$ is its residual set. Note, though, that the sets $C_k$ need not be topological circles in general, as we already remarked. This, however, does not affect our considerations, since it does not affect the conclusions of lemmas \ref{Lemma - Mereknov - Sabitova} and \ref{Lemma - N - n}, as long as the other assumptions hold for $C_k$ and the simply connected regions $D_k$ enclosed by them. We will freely use the terminology ``curves" for the sets $C_k$.

By Theorem \ref{Theorem - Hausdorff measure positive} we have that the quantity $n(\varepsilon)\varepsilon ^s$ is bounded away from $0$ and $\infty$ as $\varepsilon\to 0$, where $s=\dim_H \J(f)$. If we prove that $\J(f)$ is a homogeneous set, satisfying $(1),(2)$ and $(4)$, then using Lemma \ref{Lemma - N - n}, it will follow that $N(x)/x^s$ is bounded away from $0$ and $\infty$ as $x\to\infty$, and in particular
\begin{align*}
0<\liminf_{x\to\infty} \frac{ N(x)}{x^s} \leq \limsup_{x\to\infty} \frac{N(x)}{x^s} <\infty
\end{align*} 
which will complete the proof.

Julia sets of semi-hyperbolic rational maps are locally connected if they are connected (see \cite[Theorem 1.2]{Yin} and also \cite[Proposition 10]{Mih}), and thus for each $\varepsilon>0$ there exist finitely many Fatou components with diameter greater than $\varepsilon$ (see \cite[Theorem 4.4, pp.~112--113]{Wh2}). 

First we show that condition $(1)$ in the definition of homogeneity is satisfied. The idea is that the finitely many large Fatou components are trivially quasi-balls, as required in $(1)$, so there is nothing to prove here, but the small Fatou components can be blown up with good control to the large ones using Lemma \ref{Lemma - elevator}. The distortion estimates allow us to control the size of inscribed circles of the small Fatou components.
   
Let $d_0 \leq  (1/4K_1)^{1/\gamma}$, where $K_1,\gamma$ are the constants appearing in Lemma \ref{Lemma - elevator}. We also make $d_0$ even smaller so that for $r\leq d_0$ and $p\in \J(f)$ the conclusions of Lemma \ref{Lemma - elevator} are true. Since there are finitely many curves $C_k$ with $\diam C_k > d_0/2$, for these $C_k$ there exist concentric inscribed and circumscribed circles with radii $r_k$ and $R_k$ respectively, such that $R_k/r_k \leq \alpha$, for some $\alpha>0$. This implies that $2r_k \leq \diam C_k \leq 2R_k \leq 2\alpha r_k$. 

If $C_k$ is arbitrary with $\diam C_k \leq d_0/2$, then for $p\in C_k$ and $r=2\diam C_k$, by Lemma \ref{Lemma - elevator}(a) there exists $n\in \N$ such that 
\begin{align*}
\frac{r/2}{2r}=\frac{\diam C_k}{\diam B(p,r)} \leq K_1 ( \diam f^n(C_k))^\gamma.
\end{align*}  
Note that the Fatou component $D_k$ is mapped under $f^n$ onto a Fatou component $D_k'$. Since $f^n$ is proper, the boundary $C_k$ of $D_k$ is mapped onto $C_k'\coloneqq\partial D_k'$. Then the above inequality can be written as
\begin{align*}
\diam C_k' \geq (1/4K_1)^{1/\gamma} \geq d_0.
\end{align*}
Hence, $C_k'$ is one of the ``large" curves, for which there exists a inscribed ball $B(q',r_k')$ such that $2r_k'\leq \diam C_k' \leq 2\alpha r_k'$. Observe that $r_k'\geq d_0/2\alpha$.

Let $q \in D_k\subset B(p,r)$ be a preimage of $q'$ under $f^n$, and $W\subset D_k$ be the component of $f^{-n} (B(q',r_k'))$ that contains $q$. For each $u\in \partial W$, by Lemma \ref{Lemma - elevator}(c) one has
\begin{align*}
r_k'=|f^n(q)-f^n(u)|\leq K_2 \frac{|q-u|}{2r}.
\end{align*}
Thus
\begin{align*}
\frac{\diam C_k} {|q-u|} =\frac{r/2}{|q-u|} \leq \frac{K_2}{4r_k'}  \leq \frac{\alpha K_2}{2d_0}.
\end{align*}
Letting $R_k=\diam C_k$, and $r_k= \inf_{u\in \partial W} |q-w|$, one obtains $R_k/r_k \leq \alpha K_2/2d_0$, so $(1)$ is satisfied with $\alpha'= \max\{\alpha, \alpha K_2/ 2d_0 \}$.

Similarly, we show that condition $(2)$ is also true. Let $r_1$ be the constant in Lemma \ref{Lemma - elevator}(b) and consider $d_0\leq r_1/2$ so small that the conclusions of Lemma \ref{Lemma - elevator} are true for $p\in \J(f)$ and $r\leq d_0$. Note that by compactness of $\J(f)$ there exists $\beta>0$ such that for $d_0\leq r\leq \diam \J(f)$ and $p\in \J(f)$ there exists $C_k$ such that $C_k\cap B(p,r)\neq \emptyset$ and
\begin{align}\label{Proof - (2)}
\frac{1}{\beta} r \leq  \diam C_k \leq \beta r.
\end{align}
Indeed, one can cover $\J(f)$ with finitely many balls $B_1,\dots,B_{N}$ of radius $d_0/2$ centered at $\J(f)$, such that each ball $B_j$ contains a curve $C_{k(j)}$. This is possible because every ball $B_j$ centered in the Julia set must intersect infinitely many Fatou components, otherwise $f^n$ would be a normal family in $B_j$. In particular, by local connectivity ``most" Fatou components are small, and thus one of them, say $D_{k(j)}$, will be contained in $B_j$. Now, if $B(p,r)$ is arbitrary with $p\in \J(f),r\geq d_0$, we have that $p\in B_j$ for some $j\in\{1,\dots,N\}$, and thus $ B_j\subset B(p,r)$. Since $r\in [d_0,\diam \J(f)]$ lies in a compact interval, \eqref{Proof - (2)} easily follows, by always using the same finite set of curves $C_{k(1)},\dots,C_{k(N)}$ that correspond to $B_1,\dots,B_N$, respectively. We may also assume that $\diam C_{k(j)} <r_1 /2$ for each of these curves.

Now, if $r<d_0$, $p\in \J(f)$, by Lemma \ref{Lemma - elevator}(b) we have $B(f^n(p),r_1)\subset f^n(B(p,r))$ for some $n\in \N$. By the previous, $B(f^n(p),r_1/2)$ intersects some $C_k'=C_{k(j)}$ with $\diam C_k' <r_1/2$, thus $C_k' \subset B(f^n(p),r_1)$. Hence, $B(p,r)$ contains a preimage $C_k$ of $C_k'$, and by Lemma \ref{Lemma - elevator}(a), (c) we obtain
\begin{align*}
 \frac{1}{K_2} \diam C_k' \leq  \frac{\diam C_k}{\diam B(p,r)} \leq K_1 (\diam C_k')^{\gamma}.
\end{align*}
However, $C_k'$ was one of the finitely many curves that we chose in the previous paragraph. This and the above inequalities impliy that $\diam C_k \simeq \diam B(p,r) =2r$ with uniform constants. This completes the proof of $(2)$.

Finally, we will prove that condition $(4)$ of homogeneity is satisfied. This follows easily from the fact that the Fatou components of a semi-hyperbolic rational map are \textit{uniform John domains} in the spherical metric \cite[Proposition 9]{Mih}. Since we are only interested in the bounded Fatou components, we can use instead the Euclidean metric. A domain $\Omega\subset \C$ is a $\lambda$-John domain ($0<\lambda \leq 1$) if there exists a basepoint $z_0\in \Omega$ such that for all $z_1\in \Omega$ there exists an arc $\gamma\subset \Omega$ connecting $z_1$ to $z_0$ such that for all $z\in \gamma$ we have
\begin{align*}
\delta(z) \geq \lambda |z-z_1|,
\end{align*}
where $\delta(z)\coloneqq \dist(z,\partial \Omega)$.

In our case, the bounded Fatou components $D_k, k\geq 1$ are uniform John domains, i.e., John domains with the same constant $\lambda \leq 1 $. Let $B(p,r)$ be a ball centered at some $D_k$ that does not contain $D_k$. We will show that there exists a uniform constant $\tau>0$ such that 
\begin{align}\label{Proof - (4)}
m_2( D_k\cap B(p,r))\geq \tau r^2.
\end{align} 
If $B(p,r/2) \subset D_k$, then $m_2( D_k \cap B(p,r)) \geq m_2(B(p,r/2)) = \pi r^2/4$, so \eqref{Proof - (4)} is true with $\tau= \pi/4$. Otherwise, $\partial B(p,r/2)$ intersects $C_k=\partial D_k$ at a point $z_1$. We split in two cases:
 
\begin{case} The basepoint $z_0$ satisfies $|z_0-p|\geq r/4$. Then consider a path $\gamma\subset D_k$ from $z_0$ to $p$, as in the definition of a John domain, such that $\delta(z) \geq \lambda |z-p|$ for all $z\in \gamma$. In particular let $z_2 \in \gamma$ be a point such that $|z_2-p|=r/4$, thus $\delta(z_2)\geq \lambda r/4 $. Since $\lambda\leq 1$, we have $B(z_2,\lambda r/4) \subset D_k\cap B(p,r/2) $, hence 
$$m_2(D_k\cap B(p,r))\geq m_2(B(z_2, \lambda r/4))= \pi \lambda^2 r^2 /4.$$
\end{case}

\begin{case} The basepoint $z_0$ lies in $B(p,r/4)$. Consider a point $z_3\in D_k$ close to $z_1\in \partial D_k\cap \partial B(p,r/2)$ such that $|z_0-z_3|\geq r/4$. Then, by the definition of a John domain for $z=z_0$, we have $\delta(z_0)\geq \lambda |z_0-z_3|\geq \lambda r/4$. Hence, $B(z_0, \lambda r/4) \subset D_k\cap B(p,r/2)$, so 
$$m_2(D_k\cap B(p,r)) \geq \pi \lambda^2 r^2/4.$$
\end{case} 
Summarizing, $(4)$ is true for $\tau= \pi \lambda^2/4$.
\end{proof}

\begin{remark} Even when the Julia set of a semi-hyperbolic map is a Sierpi\'nski carpet, the uniform relative separation of the peripheral circles $C_k$ in condition $(3)$ need not be true. In fact, it is known that for such Julia sets condition $(3)$ is true if and only if for all critical points $c\in \J(f)$, $\omega(c)$ does not intersect the boundary of any Fatou component; see \cite[Proposition 3.9]{QYZ}. Recall that $\omega(c)$ is the set of accumulation points of the orbit $\{f^n(c)\}_{n\in \N}$.
\end{remark} 

\begin{remark} In \cite[Proposition 3.7]{QYZ} it is shown that if the boundaries of Fatou components of a semi-hyperbolic map $f$ are Jordan curves, then they are actually uniform quasicircles. If, in addition, they are uniformly relatively separated (i.e., condition $(3)$), Proposition \ref{Proposition - Homogeneous} implies that $\textrm{AR}\Cdim \J(f) \geq \Cdim \J(f)>1$. 

On the other hand, if $f$ is a semi-hyperbolic \textit{polynomial} with connected Julia set, then not all boundaries of Fatou components are Jordan curves. In fact, $\J(f)$ coincides with the boundary of a single Fatou component $\mathcal A$ which is a John domain, and is called the \textit{basin of attraction of $\infty$}; \cite[Theorem 1.1]{CJY}. According to a recent result of Kinneberg (\cite[Theorem 1.1]{Ki}), which is based on \cite[Theorem 1.2]{Car}, boundaries of planar John domains have Ahlfors regular conformal dimension equal to $1$, if they are connected. Therefore, $\textrm{AR}\Cdim  \J(f)=1$, in contrast to the previous case.  
\end{remark}

Next, we prove Corollary \ref{Corollary}.

\begin{proof}[Proof of Corollary \ref{Corollary}]

Let $s=\dim_H \J(f)$. By Theorem \ref{Theorem - Main} there exists a constant $C>0$ such that 
\begin{align}\label{Corollary - proof}
\frac{1}{C} x^s \leq N(x) \leq C x^s
\end{align}
for all $x>0$. Taking logarithms, one obtains
$$ \frac{\log(1/C)}{\log x}+ s\leq \frac{\log N(x)}{\log x} \leq s + \frac{\log C}{\log x}.$$
Letting $x\to \infty$ yields $\lim_{x\to\infty} \log N(x)/\log x= s$ which completes part of the proof.

Recall that the exponent $E$ of the packing of the Fatou components of $f$ is defined by 
$$E= \inf \biggl\{ t\in \R : \sum_{k\geq 0} (\diam C_k)^t <\infty \biggr\}$$
and it remains to show that $E=s$. Note that for $t=0$ the sum $E(t)\coloneqq\sum_{k\geq 0} (\diam C_k)^t$ diverges. Also, since for semi-hyperbolic rational maps there are only finitely many ``large" Fatou components, if $E(t_0)=\infty$, then $E(t)=\infty$ for all $t\leq t_0$. If $t<s$, using \eqref{Corollary - proof}, one has
\begin{align*}
\sum_{k\geq 0} (\diam C_k)^t &= \lim_{n\to\infty} \sum_{\substack{k \geq 0 \\ \diam C_k \geq 1/n}} (\diam C_k)^t \\
&\geq \liminf_{n\to\infty} \frac{1}{n^t} N(n)\\
& \geq  \liminf_{n\to\infty} \frac{1}{C} n^{s-t}\\
& =\infty.  
\end{align*}
This implies that $E\geq s$.

Conversely, assume that $t>s$. Since there are only finitely many ``large'' Fatou components, we only need to take into account the sets $C_k$ with $\diam C_k\leq 1$ in the sum $\sum (\diam C_k)^t$. Using again \eqref{Corollary - proof} we have
\begin{align*}
\sum_{ \substack{k\geq 0 \\ \diam C_k\leq 1}} (\diam C_k)^t &= \sum_{n=1}^\infty  \sum_{ \substack{k\geq 0 \\ 1/2^{n}< \diam C_k \leq {1}/{2^{n-1}}} } (\diam C_k)^t \\
&\leq  \sum_{n=1}^\infty  \frac{1}{2^{(n-1)t}} N(2^n)\\
&\leq C2^t\sum_{n=1}^\infty \frac{1}{2^{n(t-s)}} \\
&<\infty.
\end{align*}
Hence $E\leq s$, which completes the proof.
\end{proof}

\end{document}